\newtheorem{thm}[equation]{Theorem}
\newtheorem{lem}[equation]{Lemma}
\theoremstyle{definition}
\theoremstyle{remark}
\newtheorem{rmk}[equation]{Remark}
\numberwithin{equation}{section}
\numberwithin{figure}{section}
\newcommand\abs[2][empty]{\csname#1\endcsname \lvert{#2}\csname#1\endcsname\rvert}
\newcommand\doublebar[2][empty]{\csname#1\endcsname \lVert{#2}\csname#1\endcsname\rVert}
\newcommand\mat[1]{\bm{#1}}
\newcommand\arr[1]{\dot{\bm{#1}}}
\newcommand\Div{\mathop{\mathrm{div}}\nolimits}
\newcommand\Tr{\mathop{\smash{\arr{\mathrm{Tr}}}\vphantom{T}}\nolimits}
\newcommand\WTr{\mathop{\smash{\widehat{\bm{\mathrm{Tr}}}} \vphantom{T}}\nolimits}
\newcommand\Trace{\mathop{\mathrm{Tr}}\nolimits}
\newcommand\M{\mathop{\smash{\arr{\mathrm{M}}}\vphantom{M}}\nolimits}
\newcommand\MM{\mathop{\smash{\widehat{\bm{\mathrm{M}}}}\vphantom{M}}\nolimits}
\newcommand\R{\mathbb{R}} 
\newcommand\C{\mathbb{C}} 
\newcommand\N{\mathbb{N}}
\newcommand\1{\mathbf{1}}
\newcommand\D{\mathcal{D}}
\newcommand\s{\mathcal{S}}
\newcommand\HH{\mathfrak{H}} 
\newcommand\CC{\mathfrak{C}} 
\newcommand\B{\mathfrak{B}} 
\newcommand\PP{\mathcal{N}} 
\newcommand\DD{\mathfrak{D}}
\newcommand\NN{\mathfrak{N}}
\newcommand\XX{\mathfrak{X}}
\newcommand\UU{\mathfrak{U}}
\newcommand\VV{\mathfrak{W}}
\def\dmn{{n+1}}
\def\dmn{d}
\newcommand\condT{\hyperlink{ConditionT}{\condTname}}
\newcommand\condM{\hyperlink{ConditionM}{\condMname}}
\newcommand\condS{\hyperlink{ConditionS}{\condSname}}
\newcommand\condD{\hyperlink{ConditionD}{\condDname}}
\newcommand\condG{\hyperlink{ConditionG}{\condGname}}
\newcommand\condTname{\textbf{\textup{(T)}}}
\newcommand\condMname{\textbf{\textup{(M)}}}
\newcommand\condSname{\textbf{\textup{(S)}}}
\newcommand\condDname{\textbf{\textup{(D)}}}
\newcommand\condGname{\textbf{\textup{(G)}}}
\newcommand\condTTname{\textbf{\textup{(T')}}}
\newcommand\condMMname{\textbf{\textup{(M')}}}
\newcommand\condSSname{\textbf{\textup{(S')}}}
\newcommand\condDDname{\textbf{\textup{(D')}}}
\newcommand\condGGname{\textbf{\textup{(G')}}}
\newcommand\condJSctsname{\textbf{\textup{(J1)}}}
\newcommand\condJDctsname{\textbf{\textup{(J2)}}}
\newcommand\condJSjumpname{\textbf{\textup{(J3)}}}
\newcommand\condJDjumpname{\textbf{\textup{(J4)}}}
\newcommand\condTT{\hyperlink{ConditionTT}{\condTTname}}
\newcommand\condMM{\hyperlink{ConditionMM}{\condMMname}}
\newcommand\condSS{\hyperlink{ConditionSS}{\condSSname}}
\newcommand\condDD{\hyperlink{ConditionDD}{\condDDname}}
\newcommand\condGG{\hyperlink{ConditionGG}{\condGGname}}
\newcommand\condJScts{\hyperlink{ConditionJScts}{\condJSctsname}}
\newcommand\condJDcts{\hyperlink{ConditionJDcts}{\condJDctsname}}
\newcommand\condJSjump{\hyperlink{ConditionJSjump}{\condJSjumpname}}
\newcommand\condJDjump{\hyperlink{ConditionJDjump}{\condJDjumpname}}
\begin{document}

\title{Layer potentials for general linear elliptic systems}

\author{Ariel Barton}
\address{Ariel Barton, Department of Mathematical Sciences,
			309 SCEN,
			University of Ar\-kan\-sas,
			Fayetteville, AR 72701}
\email{aeb019@uark.edu}

\subjclass[2010]{Primary 
35J58, 
Secondary 
31B10, 
31B20
}

\begin{abstract} 
In this paper we construct layer potentials for elliptic differential operators using the Lax-Milgram theorem, without recourse to the fundamental solution; this allows layer potentials to be constructed in very general settings. We then generalize several well known properties of layer potentials for harmonic and second order equations, in particular the Green's formula, jump relations, adjoint relations, and  Verchota's equivalence between well-posedness of boundary value problems and invertibility of layer potentials.
\end{abstract}

\keywords{Higher order differential equation, layer potentials, Dirichlet problem, Neumann problem}

\maketitle

\tableofcontents


\section{Introduction}

There is by now a very rich theory of boundary value problems for Laplace's operator, and more generally for second order divergence form operators $-\Div \mat A\nabla$. The Dirichlet problem
\begin{equation*}-\Div \mat A\nabla u=0 \text{ in }\Omega,\quad u=f \text{ on }\partial\Omega, \quad \doublebar{u}_\XX\leq C\doublebar{f}_\DD\end{equation*}
and the Neumann problem
\begin{equation*}-\Div \mat A\nabla u=0 \text{ in }\Omega,\quad \nu\cdot\mat A\nabla u=g \text{ on }\partial\Omega, \quad \doublebar{u}_\XX\leq C\doublebar{g}_\NN\end{equation*}
are known to be well-posed for many classes of coefficients $\mat A$ and  domains~$\Omega$, and with solutions in many spaces $\XX$ and boundary data in many boundary spaces $\DD$ and~$\NN$.

A great deal of current research consist in extending these well posedness results to more general situations, such as operators of order $2m\geq 4$ (for example, \cite{MazMS10,KilS11B,MitMW11, MitM13A, BreMMM14, BarHM17pC}; see also the survey paper \cite{BarM16B}), operators with lower order terms (for example, \cite{BraBHV12,Tao12,Fel16,PanT16,DavHM16p}) and operators acting on functions defined on manifolds (for example, \cite{MitMT01,MitMS06,KohPW13}).

Two very useful tools in the second order theory are the double and single layer potentials given by 
\begin{align}
\label{eqn:introduction:D}
\D^{\mat A}_\Omega f(x) &= \int_{\partial\Omega} \overline{\nu\cdot \mat A^*(y)\nabla_{y} E^{L^*}(y,x)} \, f(y)\,d\sigma(y)
,\\
\label{eqn:introduction:S}
\s^L_\Omega g(x) &= \int_{\partial\Omega}\overline{E^{L^*}(y,x)} \, g(y)\,d\sigma(y)
\end{align}
where $\nu$ is the unit outward normal to~$\Omega$ and where $E^L(y,x)$ is the fundamental solution for the operator~$L=-\Div \mat A\nabla$, that is, the formal solution to $L E^L(\,\cdot\,,x)=\delta_x$. These operators are inspired by a formal integration by parts
\begin{align*}u(x) 
&= \int_\Omega \overline{L^*E^{L^*}(\,\cdot\,,x)}\,u
\\&=- \int_{\partial\Omega}\!\! \overline{\nu\cdot \mat A^*\nabla E^{L^*}(\,\cdot\,,x)} \, u\,d\sigma
+\int_{\partial\Omega}\!\!\overline{E^{L^*}(\,\cdot\,,x)} \, \nu\cdot \mat A\nabla u\,d\sigma
+\int_\Omega \overline{E^{L^*}(\,\cdot\,,x)}\,Lu\end{align*}
which gives the Green's formula
\begin{equation*}u(x) = -\D^{\mat A}_\Omega (u\big\vert_{\partial\Omega})(x) + \s^L_\Omega (\nu\cdot \mat A\nabla u)(x)\quad\text{if $x\in\Omega$ and $Lu=0$ in $\Omega$}\end{equation*}
at least for relatively well-behaved solutions~$u$.

Such potentials have many well known properties beyond the above Green's formula, including jump and adjoint relations. In particular, by a clever argument of Verchota \cite{Ver84} and some extensions in \cite{BarM13,BarM16A}, well posedness of the Dirichlet problem is equivalent to invertibility of the operator $g\mapsto \s^L_\Omega g\big\vert_{\partial\Omega}$, and well posedness of the Neumann problem is equivalent to invertibility of the operator $f\mapsto \nu\cdot\mat A\nabla\D^{\mat A}_\Omega f$. 

This equivalence has been used to solve boundary value problems in many papers, including 
\cite{FabJR78,Ver84,DahK87,FabMM98,Zan00} in the case of harmonic functions (that is, the case $\mat A=\mat I$ and $L=-\Delta$) and
\cite{AlfAAHK11, Bar13,Ros13, HofKMP15B, HofMayMou15,HofMitMor15, BarM16A} in the case of more general operators under various assumptions on the coefficients~$\mat A$. Layer potentials have been used in other ways in \cite{PipV92,KenR09,Rul07,Mit08,Agr09,MitM11,BarM13,AusM14}. Boundary value problems were studied using a functional calculus approach in \cite{AusAH08,AusAM10,AusA11, AusR12, AusM14, AusS14p, AusM14p}; in \cite{Ros13} it was shown that certain operators arising in this theory coincided with layer potentials.

Thus, it is desirable to extend layer potentials to more general situations. One may proceed as in the homogeneous second order case, by constructing the fundamental solution, formally integrating by parts, and showing that the resulting integral operators have appropriate properties. In the case of higher order operators with constant coefficients, this has been done in \cite{Agm57,CohG83, CohG85, Ver05, MitM13A, MitM13B}. However, all three steps are somewhat involved in the case of variable coefficient operators (although see \cite{DavHM16p,Bar16} for fundamental solutions, for second order operators with lower order terms, and for higher order operators without lower order terms, respectively).


An alternative, more abstract construction is possible. The fundamental solution for various operators was constructed in \cite{HofK07,Bar16,DavHM16p} as the kernel of the Newton potential, which may itself be constructed very simply using the Lax-Milgram theorem. It is possible to rewrite the formulas \eqref{eqn:introduction:D} and~\eqref{eqn:introduction:S} for the second order layer potential directly in terms of the Newton potential, without mediating by the fundamental solution, and this construction generalizes very easily. It is this approach that was taken in \cite{BarHM15p,BarHM17pA}. 

In this paper we will provide the details of this construction in a very general context. Roughly, this construction is valid for all differential operators $L$ that may be inverted via the Lax-Milgram theorem, and all domains $\Omega$ for which suitable boundary trace operators exist. We will also show that many properties of traditional layer potentials are valid in the general case.

The organization of this paper is as follows. The goal of this paper is to construct layer potentials associated to an operator~$L$ as bounded linear operators from a space $\DD_2$ or $\NN_2$ to a Hilbert space $\HH_2$ given certain conditions on $\DD_2$, $\NN_2$ and $\HH_2$.
In Section~\ref{sec:dfn} we will list these conditions and define our terminology. Because these properties are somewhat abstract, in
Section~\ref{sec:example} we will give an example of spaces $\HH_2$, $\DD_2$ and $\NN_2$ that satisfy these conditions in the case where $L$ is a higher order differential operator in divergence form without lower order terms. 

This is the context of the paper \cite{BarHM17pC}; we intend to apply the results of the present paper therein to solve the Neumann problem with boundary data in $L^2$ for operators with $t$-independent self-adjoint coefficients.

In Section~\ref{sec:D:S} of this paper we will provide the details of the construction of layer potentials.
We will prove the higher order analogues for the Green's formula, adjoint relations, and jump relations in Section~\ref{sec:properties}. 

Finally, in Section~\ref{sec:invertible} we will show that the equivalence between well posedness of boundary value problems and invertibility of layer potentials of \cite{Ver84,BarM13,BarM16A} extends to the higher order case. 

\section{Terminology}
\label{sec:dfn}

We will construct layer potentials $\D^\B_\Omega$ and $\s^L_\Omega$ using the following objects.

\begin{itemize}
\item Two Hilbert spaces $\HH_1$ and $\HH_2$. 
\item Six vector spaces $\widehat\HH_1^\Omega$, $\widehat\HH_1^\CC$, $\widehat\HH_2^\Omega$, $\widehat\HH_2^\CC$, $\widehat\DD_1$ and $\widehat\DD_2$.
\item Bounded bilinear functionals $\B:\HH_1\times\HH_2\mapsto \C$, $\B^\Omega:\HH_1^\Omega\times\HH_2^\Omega\mapsto \C$, and $\B^\CC:\HH_1^\CC\times\HH_2^\CC\mapsto \C$. (We will define the spaces $\HH_j^\Omega$, $\HH_j^\CC$ momentarily.)
\item Bounded linear operators $\Tr_1:\HH_1\mapsto\widehat\DD_1$ and $\Tr_2:\HH_2\mapsto\widehat\DD_2$.
\item Bounded linear operators from $\HH_j$ to $\widehat\HH_j^\Omega$ and $\widehat\HH_j^\CC$; we shall denote these operators $\big\vert_\Omega$ and~$\big\vert_\CC$. 
\end{itemize}

We will work not with the spaces $\widehat \HH_j^\Omega$, $\widehat\HH_j^\CC$ and $\widehat\DD_j$, but with the spaces 
$ \HH_j^\Omega$, $\HH_j^\CC$ and $\DD_j$ defined as follows.
\begin{gather}
\HH_j^\Omega=\{F\big\vert_\Omega:F\in\HH_j\}/\sim\text{ with norm }\doublebar{f}_{\HH^\Omega_j} = \inf\{\doublebar{F}_{\HH_j}: F\big\vert_\Omega=f\}
,\\
\HH_j^\CC=\{F\big\vert_\CC : F\in\HH_j\}/\sim\text{ with norm }\doublebar{f}_{\HH^\CC_j} = \inf\{\doublebar{F}_{\HH_j}: F\big\vert_\CC=f\}
,\\
\DD_j=\{\Tr_j F:F\in\HH_j\}/\sim\text{ with norm }\doublebar{f}_{\DD_j} = \inf\{\doublebar{F}_{\HH_j}: \Tr_j F=f\}
\end{gather}
where $\sim$ denotes the equivalence relation $f\sim g$ if $\doublebar{f-g}=0$.



We impose the following conditions on the given function spaces and operators. We require that there is some $\lambda>0$ such that for every $u\in\HH_1$, $v\in \HH_2$ and $\varphi$,~$\psi\in \HH_j$ for $j=1$ or $j=2$, the following conditions are valid.
\begin{gather}
\label{cond:coercive}
\sup_{w\in \HH_1\setminus\{0\}} \frac{\abs{\B(w,v)}}{\doublebar{w}_{\HH_1}}\geq \lambda \doublebar{v}_{\HH_2},\quad
	\sup_{w\in \HH_2\setminus\{0\}} \frac{\abs{\B(u,w)}}{\doublebar{w}_{\HH_2}}\geq \lambda \doublebar{u}_{\HH_1}.
\\
\label{cond:local}
\B(u,v) = \B^\Omega(u\big\vert_{\Omega},v\big\vert_\Omega) +\B^\CC(u\big\vert_{\CC}, v\big\vert_{\CC}). 
\\
\label{cond:trace:extension}
{\text{If $\Tr_j \varphi=\Tr_j \psi$, then there is a $w\in\HH_j$ with}}
\qquad\qquad \qquad\qquad\qquad \qquad 
\\\nonumber \qquad\qquad \qquad\qquad\qquad \qquad
{\text{
$w\big\vert_\Omega=\varphi\big\vert_\Omega$, $w\big\vert_{\CC}=\psi\big\vert_{\CC}$ and $\Tr_j w= \Tr_j\varphi=\Tr_j\psi$.}}
\end{gather}


We now introduce some further terminology.

We will define the linear operator $L$ as follows. If $u\in \HH_2$, let $Lu$ be the element of the dual space $\HH_1^*$ to $\HH_1$ given by
\begin{equation}\label{dfn:L}\langle\varphi,Lu\rangle = \B(\varphi,u).\end{equation}
Notice that $L$ is bounded $\HH_2\mapsto\HH_1^*$.



If $u\in \HH_2^\Omega$, we let $(Lu)\big\vert_\Omega$ be the element of the dual space to $\{\varphi\in\HH_1:\Tr_1 \varphi=0\}$ given by
\begin{equation}\label{dfn:L:interior}
\langle\varphi,(Lu)\big\vert_\Omega\rangle = \B^\Omega(\varphi\big\vert_\Omega,u)\quad\text{for all $\varphi\in\HH_1$ with $\Tr_1\varphi=0$}
.\end{equation}
If $u\in\HH_2$, we will often use $(Lu)\big\vert_\Omega$ as shorthand for $(L(u\big\vert_\Omega))\big\vert_\Omega$.
We will primarily be concerned with the case $(Lu)\big\vert_\Omega=0$. 

Let
\begin{equation}\NN_2=\DD_1^*, \qquad \NN_1=\DD_2^*\end{equation}
denote the dual spaces to $\DD_1$ and $\DD_2$.
We will now define the Neumann boundary values of an element $u$ of $\HH_2^\Omega$ that satisfies $(Lu)\big\vert_\Omega=0$.
If $\Tr_1\varphi=\Tr_1\psi$ and $(Lu)\big\vert_\Omega=0$, then $\B^\Omega(\varphi\big\vert_\Omega-\psi\big\vert_\Omega,u)=0$ by definition of $(Lu)\big\vert_\Omega$. Thus, $\B^\Omega(\varphi\big\vert_\Omega,u)$ depends only on~$\Tr_1\varphi$, not on~$\varphi$. Thus,  $\M_\Omega^\B u$ defined as follows is a well defined element of $\NN_2$.
\begin{equation}\label{eqn:Neumann}\langle \Tr_1\varphi,\M_\Omega^\B u \rangle = \B^\Omega(\varphi\big\vert_\Omega,u)\quad\text{for all $\varphi\in \HH_1$}.\end{equation}

We may compute
\begin{equation*}\abs{\langle \arr f,\M_\Omega^\B u \rangle} \leq \doublebar{\B^\Omega} \inf\{\doublebar{\varphi}_{\HH_1}:\Tr_1\varphi=\arr f\} \doublebar{u}_{\HH_2^\Omega}
=
\doublebar{\B^\Omega} \doublebar{\arr f}_{\DD_1}\doublebar{u}_{\HH_2^\Omega}\end{equation*}
and so we have the bound
$\doublebar{\M_\Omega^\B u }_{\NN_2}\leq \doublebar{\B^\Omega}\doublebar{u}_{\HH_2^\Omega}$.

If $(Lu)\big\vert_\Omega\neq 0$, then the linear operator given by $\varphi\mapsto B^\Omega(\varphi\big\vert_\Omega,u)$ is still of interest. We will denote this operator $L(u\1_\Omega)$; that is,
if $u\in\HH_2^\Omega$ (or $u\in\HH_2$ as before), then $L(u\1_\Omega)\in \HH_1^*$ is defined by
\begin{equation}\label{dfn:L:singular}
\langle\varphi,L(u\1_\Omega)\rangle = \B^\Omega(\varphi\big\vert_\Omega,u)\quad\text{for all $\varphi\in\HH_1$}
.\end{equation}

\section{An example: higher order differential equations}
\label{sec:example}

In this section, we provide an example of a situation in which the terminology of Section~\ref{sec:dfn} and the construction and properties of layer potentials of Sections~\ref{sec:D:S} and~\ref{sec:properties} may be applied. We remark that this is the situation of \cite{BarHM17pC}, and that we will therein apply the results of this paper.

Let $m\geq 1$ be an integer, and let $L$ be an elliptic differential operator of the form
\begin{equation}
\label{eqn:L}
Lu=(-1)^m\sum_{\abs\alpha=\abs\beta= m} \partial^\alpha(A_{\alpha\beta} \partial^\beta u)\end{equation}
for some bounded measurable coefficients~$\mat A$ defined on $\R^\dmn$. Here $\alpha$ and $\beta$ are multiindices in $\N_0^\dmn$, where $\N_0$ denotes the nonnegative integers. 
As is standard in the theory, we say that $Lu=0$ in an open set $\Omega$ in the weak sense if
\begin{equation}\label{eqn:weak}\int_\Omega \sum_{\abs\alpha=\abs\beta= m} \partial^\alpha\varphi \,A_{\alpha\beta}\, \partial^\beta u=0 \quad\text{for all $\varphi\in C^\infty_0(\Omega)$}.\end{equation}



We impose the following ellipticity condition: we require that for some $\lambda>0$,
\begin{equation*}\Re \sum_{{\abs\alpha=\abs\beta= m}} \int_{\R^\dmn} \overline{\partial^\alpha\varphi}\,A_{\alpha\beta}\,\partial^\beta\varphi\geq \lambda \doublebar{\nabla^m\varphi}_{L^2(\R^\dmn)}^2 \quad \text{for  all $\varphi\in\dot W^2_m(\R^\dmn)$.}
\end{equation*}

Let $\Omega\subset\R^\dmn$ be a Lipschitz domain with connected boundary, and let $\CC=\R^\dmn\setminus\bar\Omega$ denote the interior of its complement. Observe that $\partial\Omega=\partial\CC$. 

The following function spaces and linear operators satisfy the conditions of Section~\ref{sec:dfn}.
\begin{itemize}
\item $\HH_1=\HH_2=\HH$ is the homogeneous Sobolev space $\dot W^2_m(\R^\dmn)$ of locally integrable functions~$\varphi$ (or rather, of equivalence classes of functions modulo polynomials of degree $m-1$) with weak derivatives of order $m$, and such that the $\HH$-norm given by $\doublebar{\varphi}_\HH=\doublebar{\nabla^m\varphi}_{L^2(\R^\dmn)}$ is finite. This space is a Hilbert space with inner product $\langle \varphi,\psi\rangle =\sum_{\abs\alpha=m} \int_{\R^\dmn} \overline{\partial^\alpha\varphi}\,\partial^\alpha\psi$.
\item $\widehat\HH^\Omega$ and $\widehat\HH^\CC$ are the Sobolev spaces $\widehat\HH^\Omega=\dot W^2_m(\Omega)=\{\varphi:\nabla^m\varphi\in L^2(\Omega)\}$ and $\widehat\HH^\CC=\dot W^2_m(\CC)=\{\varphi:\nabla^m\varphi\in L^2(\CC)\}$ with the obvious norms. 

\item $\widehat \DD$ denotes the (vector-valued) Besov space $ \dot B^{2,2}_{1/2}(\partial\Omega)$ of locally integrable functions modulo constants with norm
\begin{equation*}\doublebar{f}_{\dot B^{2,2}_{1/2}(\partial\Omega)}
= \biggl(\int_{\partial\Omega}\int_{\partial\Omega} \frac{\abs{f(x)-f(y)}^2}{\abs{x-y}^\dmn}\,d\sigma(x)\,d\sigma(y)\biggr)^{1/2}.\end{equation*}

\item In \cite{Bar16pA,BarHM15p,BarHM17pC}, $\Tr$ is the linear operator defined on $ \HH$ by $\Tr u=\Trace^\Omega \nabla^{m-1}u\big\vert_\Omega$, where $\Trace^\Omega$ is the standard boundary trace operator of Sobolev spaces. (Given a suitable modification of the trace space $\DD$, it is also possible to choose 
$\Tr u = \{\Trace^\Omega \partial^\gamma u\}_{\abs\gamma\leq m-1}$, or more concisely $\Tr u = (\Trace^\Omega u,\partial_\nu  u,\dots,\partial_\nu^{m-1} u)$,  where $\nu$ is the unit outward normal, so that the boundary derivatives of $u$ of all orders are recorded. See, for example, 
\cite{
PipV95B,She06B,
Agr07,
MazMS10,MitM13A}.)
\item $\B$ is the bilinear operator on $\HH\times\HH$ given by 
\begin{equation*}\B(\psi,\varphi) = \sum_{\abs\alpha=\abs\beta= m}\int_{\R^\dmn}  \overline{\partial^\alpha\psi}\,A_{\alpha\beta}\,\partial^\beta\varphi.\end{equation*}
$\B^\Omega$ and $\B^\CC$ are defined analogously to~$\B$, but with the integral over $\R^\dmn$ replaced by an integral over $\Omega$ or~$\CC$.
\end{itemize}

$\B$, $\B^\Omega$ and $\B^\CC$ are clearly bounded and bilinear, and the restriction operators $\big\vert_\Omega:\HH\mapsto\widehat\HH^\Omega$,
$\big\vert_\CC:\HH\mapsto\widehat\HH^\CC$ are bounded and linear. 

The trace operator $\Tr$ is linear. If $\Omega=\R^\dmn_+$ is the half-space, then boundedness of $\Tr:\HH\mapsto\DD$ was established in \cite[Section~5]{Jaw77}; this extends to the case where $\Omega$ is the domain above a Lipschitz graph via a change of variables. If $\Omega$ is a bounded Lipschitz domain, then boundedness of $\Tr:W\mapsto\widehat\DD$, where $W$ is the inhomogeneous Sobolev space with norm $\sum_{k=0}^m \doublebar{\nabla^k\varphi}_{L^2(\R^\dmn)}$, was established in \cite[Chapter~V]{JonW84}. Then boundedness of $\Tr:\HH\mapsto\widehat\DD$ follows by the Poincar\'e inequality.

By assumption, the coercivity condition~\eqref{cond:coercive} is valid. If $\partial\Omega$ has Lebesgue measure zero, then Condition~\eqref{cond:local} is valid. A straightforward density argument shows that if $\Tr$ is bounded, then Condition~\eqref{cond:trace:extension} is valid.


Thus, the given spaces and operators satisfy the conditions imposed at the beginning of Section~\ref{sec:dfn}.

We now comment on a few of the other quantities defined in Section~\ref{sec:dfn}.

If $u\in \HH$, and if $Lu=0$ in $\Omega$ in the weak sense of formula~\eqref{eqn:weak}, then by density $\B^\Omega(\varphi,u)=0$ for all $\varphi\in\HH$ with $\Tr\varphi=0$; that is, $(Lu)\big\vert_\Omega$ as defined in Section~\ref{sec:dfn} satisfies $(Lu)\big\vert_\Omega=0$.

For many classes of domains there is a bounded extension operator from $\widehat\HH^\Omega$ to $\HH$, and so $\HH^\Omega=\widehat\HH^\Omega=\dot W^2_m(\Omega)$ with equivalent norms. (If $\Omega$ is a Lipschitz domain then this is a well known result of Calder\'on \cite{Cal61} and Stein \cite[Theorem~5, p.~181]{Ste70}; the result is true for more general domains, see for example \cite{Jon81}.)

As mentioned above, if $\Omega\subset\R^\dmn$ is a Lipschitz domain, then $\Tr$ is a bounded operator $\HH \mapsto \widehat \DD$. If $\partial\Omega$ is connected, then $\Tr$ moreover has a bounded right inverse. (See \cite{JonW84} or \cite[Proposition~7.3]{MazMS10} in the inhomogeneous case, and \cite{Bar16pB} in the present homogeneous case.) Thus, the norm in $\DD$ is comparable to the Besov norm. 
Furthermore, $\{\nabla^{m-1}\varphi\big\vert_{\partial\Omega}:\varphi\in C^\infty_0(\R^\dmn)\}$ is dense in $\DD$. Thus, if $m=1$ then $\DD=\widehat\DD=\dot B^{2,2}_{1/2}(\partial\Omega)$. If $m\geq 2$ then $\DD$ is a closed {proper} subspace of $\widehat\DD$, as the different partial derivatives of a common function must satisfy certain compatibility conditions. In this case $\DD$ is the Whitney-Sobolev space 
used in many papers, including \cite{AdoP98, 
MazMS10, MitMW11, MitM13A, MitM13B, BreMMM14, Bar16pA}.

If $m=1$, then by an integration by parts argument we have that $\M_\B^\Omega u = \nu\cdot \mat A\nabla u$, where $\nu$ is the unit outward normal to~$\Omega$, whenever $u$ is sufficiently smooth. The weak formulation of Neumann boundary values of formula~\eqref{eqn:Neumann} coincides with the formulation of higher order Neumann boundary data of \cite{BarHM15p,Bar16pA,BarHM17pC} given the above choice of $\Tr=\Trace^\Omega \nabla^{m-1}$, and with that of \cite{
Ver05,Agr07,
MitM13A} if we instead choose $\Tr u=(\Trace^\Omega u, \partial_\nu u,\dots,\partial_\nu^{m-1} u)$ or $\Tr u = \{\Trace^\Omega \partial^\gamma u\}_{\abs\gamma\leq m-1}$. 

%


\section{Construction of layer potentials}
\label{sec:D:S}

We will now use the Babu\v{s}ka-Lax-Milgram theorem to construct layer potentials. This theorem may be stated as follows.
\begin{thm}[{\cite[Theorem~2.1]{Bab70}}]
\label{thm:lax-milgram}
Let $\HH_1$ and $\HH_2$ be two Hilbert spaces, and let $\B$ be a bounded bilinear form on $\HH_1\times \HH_2$ that is coercive in the sense that for some fixed $\lambda>0$, formula~\eqref{cond:coercive} is valid for every $u\in\HH_1$ and $v\in\HH_2$.

Then for every linear functional $T$ defined on ${\HH_1}$ there is a unique $u_T\in {\HH_2}$ such that $\B(v,u_T)=\overline{T(v)}$. Furthermore, $\doublebar{u_T}_{\HH_2}\leq \frac{1}{\lambda}\doublebar{T}_{\HH_1\mapsto\C}$.
\end{thm}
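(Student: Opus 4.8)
The plan is to reduce everything to the invertibility of a single bounded linear operator $S\colon\HH_2\to\HH_1$ attached to $\B$, with the two inequalities in~\eqref{cond:coercive} providing, respectively, a lower bound for $S$ and the triviality of the orthogonal complement of its range. (We may of course assume $T$ is bounded, since otherwise no such $u_T$ can exist and there is nothing to prove.)

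First I would build $S$. For fixed $u\in\HH_2$ the map $v\mapsto\B(v,u)$ is a bounded conjugate-linear functional on $\HH_1$, so the Riesz representation theorem produces a unique $Su\in\HH_1$ with $\langle v,Su\rangle_{\HH_1}=\B(v,u)$ for all $v\in\HH_1$; linearity of $\B$ in its second argument makes $u\mapsto Su$ linear, and since $\doublebar{Su}_{\HH_1}=\sup_{v\in\HH_1\setminus\{0\}}\abs{\B(v,u)}/\doublebar{v}_{\HH_1}$, the operator $S$ is bounded and, by the first inequality in~\eqref{cond:coercive}, satisfies
\[\doublebar{Su}_{\HH_1}\geq\lambda\doublebar{u}_{\HH_2}\qquad\text{for all }u\in\HH_2.\]
In the same way I would represent the bounded conjugate-linear functional $v\mapsto\overline{T(v)}$ as $\langle v,t\rangle_{\HH_1}$ for a unique $t\in\HH_1$ with $\doublebar{t}_{\HH_1}=\doublebar{T}_{\HH_1\mapsto\C}$. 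The desired identity $\B(v,u_T)=\overline{T(v)}$ for all $v\in\HH_1$ is then exactly $Su_T=t$, so the whole theorem comes down to showing that $S$ is a bijection of $\HH_2$ onto $\HH_1$.

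The surjectivity of $S$ is the one step with real content. The lower bound above gives at once that $S$ is injective and, via a Cauchy-sequence argument using completeness of $\HH_2$, that the range of $S$ is closed in $\HH_1$. To see the range is also dense, suppose $v\in\HH_1$ is orthogonal to it; then $\B(v,u)=\langle v,Su\rangle_{\HH_1}=0$ for every $u\in\HH_2$, so the second inequality in~\eqref{cond:coercive} forces $\lambda\doublebar{v}_{\HH_1}\leq 0$, i.e.\ $v=0$. (Equivalently, the second inequality says $\ker S^*=\{0\}$, and a bounded operator with closed range and trivial cokernel is onto.) Hence $S$ is bijective, and we set $u_T:=S^{-1}t$, which satisfies $\langle v,Su_T\rangle_{\HH_1}=\langle v,t\rangle_{\HH_1}=\overline{T(v)}$, that is $\B(v,u_T)=\overline{T(v)}$, and $\doublebar{u_T}_{\HH_2}\leq\lambda^{-1}\doublebar{Su_T}_{\HH_1}=\lambda^{-1}\doublebar{t}_{\HH_1}=\lambda^{-1}\doublebar{T}_{\HH_1\mapsto\C}$. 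Uniqueness is immediate from the first inequality in~\eqref{cond:coercive}: if $u$ and $u'$ both worked then $\B(v,u-u')=0$ for all $v$, whence $\lambda\doublebar{u-u'}_{\HH_2}\leq 0$.

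Thus the only genuine obstacle is the surjectivity of $S$, and more precisely the need to invoke \emph{both} halves of~\eqref{cond:coercive} — one to get a closed range, the other to kill the cokernel; everything else is routine bookkeeping with the Riesz theorem, requiring only a little care about which arguments of $\B$ and of $T$ are linear and which are conjugate-linear.
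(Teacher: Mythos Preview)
Your proof is correct and is essentially the standard argument for the Babu\v{s}ka--Lax--Milgram theorem. Note, however, that the paper does not actually supply a proof of this statement: Theorem~\ref{thm:lax-milgram} is simply quoted from \cite{Bab70} and used as a black box throughout Section~\ref{sec:D:S}. Your argument --- Riesz-represent $\B(\,\cdot\,,u)$ to get an operator $S$, use the first half of~\eqref{cond:coercive} for injectivity and closed range, the second half to kill the cokernel --- is precisely the approach taken in Babu\v{s}ka's original paper, so there is nothing to compare against within this paper itself.
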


We construct layer potentials as follows.

Let $\arr g\in\NN_2$. Then the operator $T_{\arr g}\varphi = \langle \arr g,\Tr_1\varphi\rangle$ is a bounded linear operator on~$\HH_1$. By the Lax-Milgram lemma, there is a unique $u_T=\s^L_\Omega\arr g\in \HH_2$ such that 
\begin{equation}\label{eqn:S}
\B( \varphi, \s^L_\Omega\arr g ) = \langle \Tr \varphi,\arr g\rangle
\quad\text{for all $\varphi\in\HH_1$}.\end{equation}
We will let $\s^L_\Omega\arr g$ denote the single layer potential of~$\arr g$. Observe that the dependence of $\s^L_\Omega$ on the parameter $\Omega$ consists entirely of the dependence of the trace operator on~$\Omega$, and the connection between $\Tr_2$ and $\Omega$ is given by formula~\eqref{cond:trace:extension}. This formula is symmetric about an interchange of $\Omega$ and $\CC$, and so $\s^L_\Omega \arr g=\s^L_\CC\arr g$.

The double layer potential is somewhat more involved. We begin by defining the Newton potential.

Let $H$ be an element of the dual space $\HH_1^*$ to~$\HH_1$. By the Lax-Milgram theorem, there is a unique element $\PP^L H$ of $\HH_2$ that satisfies
\begin{equation}
\label{eqn:newton}
\B(\varphi,\PP^L H) = \langle \varphi,H\rangle\quad\text{for all $\varphi\in\HH_1$}.\end{equation}
We refer to $\PP^L$ as the Newton potential.
In some applications, it is easier to work with the Newton potential rather than the single layer potential directly; we remark that
\begin{equation}\s^L_\Omega \arr g = \PP^L (T_{\arr g}) \quad\text{where } \langle T_{\arr g},\varphi \rangle = \langle \arr g,\Tr_1\varphi\rangle.\end{equation}

We now return to the double layer potential. Let $\arr f\in\DD_2$. Then there is some $F\in \HH_2$ such that $\Tr_2 F=\arr f$. Let
\begin{equation}
\label{eqn:D:+}\D_\Omega^\B \arr f = -F\big\vert_\Omega + \PP^L (L(\1_\Omega F))\big\vert_\Omega
\qquad\text{if $\Tr_2 F=\arr f$}.\end{equation}
Notice that $\D_\Omega^\B \arr f$ is an element of $\HH_2^\Omega$, not of $\HH_2$.

%

We will conclude this section by showing that $\D^\B_\Omega\arr f$ is well defined, that is, does not depend on the choice of $F$ in formula~\eqref{eqn:D:+}. We will also establish that layer potentials are bounded operators.

\begin{lem}\label{lem:potentials:bounded}
The double layer potential is well defined. Furthermore, we have the bounds
\begin{gather*}\doublebar{\D^\B_\Omega\arr f}_{\HH_2^\Omega} \leq \frac{\doublebar{B^\CC}}{\lambda}\doublebar{\arr f}_{\DD_2},
\quad
\doublebar{\D^\B_\CC\arr f}_{\HH_2^\CC} \leq \frac{\doublebar{B^\Omega}}{\lambda}\doublebar{\arr f}_{\DD_2},
\quad
\doublebar{\s^L_\Omega\arr g}_{\HH_2} \leq \frac{1}{\lambda}\doublebar{\arr g}_{\NN_2}.
\end{gather*}
\end{lem}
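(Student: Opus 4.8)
The plan is to replace the defining expression~\eqref{eqn:D:+} by an equivalent one in which the term $-F\big\vert_\Omega$ has been absorbed, and then to read off both well-definedness and the three bounds from this new formula together with the Babu\v{s}ka--Lax--Milgram theorem (Theorem~\ref{thm:lax-milgram}) and the definitions of the quotient norms.

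First I would record two elementary identities. For any $F\in\HH_2$ one has $\PP^L(LF)=F$: by~\eqref{eqn:newton} and~\eqref{dfn:L}, $\B(\varphi,\PP^L(LF))=\langle\varphi,LF\rangle=\B(\varphi,F)$ for every $\varphi\in\HH_1$, so $\B(\varphi,\PP^L(LF)-F)=0$ for all~$\varphi$, and the first inequality in~\eqref{cond:coercive} forces $\PP^L(LF)=F$. Also, the locality condition~\eqref{cond:local} together with~\eqref{dfn:L} and~\eqref{dfn:L:singular} (and the analogous definition for~$\CC$) gives the splitting $LF=L(\1_\Omega F)+L(\1_\CC F)$ in~$\HH_1^*$. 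Applying the linear operator $\PP^L$ and using $\PP^L(LF)=F$ yields $\PP^L(L(\1_\Omega F))=F-\PP^L(L(\1_\CC F))$; substituting this into~\eqref{eqn:D:+} and using linearity of $\big\vert_\Omega$, the term $-F\big\vert_\Omega$ cancels, leaving
\begin{equation*}
\D_\Omega^\B\arr f=-\PP^L\bigl(L(\1_\CC F)\bigr)\big\vert_\Omega\qquad\text{whenever }\Tr_2 F=\arr f,
\end{equation*}
and, by the symmetry of the setup under interchange of $\Omega$ and $\CC$ (just as for the identity $\s^L_\Omega=\s^L_\CC$), $\D_\CC^\B\arr f=-\PP^L(L(\1_\Omega F))\big\vert_\CC$.

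For well-definedness I would point out that the original formula~\eqref{eqn:D:+} exhibits $\D_\Omega^\B\arr f$ as depending on the representative $F$ only through $F\big\vert_\Omega$ (both terms do, since $L(\1_\Omega F)$ depends only on $F\big\vert_\Omega$), whereas the formula just obtained exhibits it as depending on $F$ only through $F\big\vert_\CC$. Thus if $\Tr_2 F=\Tr_2 G$, then choosing via~\eqref{cond:trace:extension} some $w\in\HH_2$ with $w\big\vert_\Omega=F\big\vert_\Omega$ and $w\big\vert_\CC=G\big\vert_\CC$, the value computed from $F$ equals the value computed from $w$ (by the first dependence), which equals the value computed from $G$ (by the second); hence~\eqref{eqn:D:+} is independent of the choice of~$F$.

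Finally I would extract the bounds. For the single layer potential, $\s^L_\Omega\arr g=u_{T_{\arr g}}$ with $T_{\arr g}\varphi=\langle\arr g,\Tr_1\varphi\rangle$, so Theorem~\ref{thm:lax-milgram} gives $\doublebar{\s^L_\Omega\arr g}_{\HH_2}\leq\tfrac1\lambda\doublebar{T_{\arr g}}_{\HH_1\mapsto\C}$, and $\abs{T_{\arr g}\varphi}\leq\doublebar{\arr g}_{\NN_2}\doublebar{\Tr_1\varphi}_{\DD_1}\leq\doublebar{\arr g}_{\NN_2}\doublebar{\varphi}_{\HH_1}$ because $\doublebar{\Tr_1\varphi}_{\DD_1}=\inf\{\doublebar{\Phi}_{\HH_1}:\Tr_1\Phi=\Tr_1\varphi\}\leq\doublebar{\varphi}_{\HH_1}$. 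For the double layer potential I would combine the equivalent formula with the inequality $\doublebar{g\big\vert_\Omega}_{\HH_2^\Omega}\leq\doublebar{g}_{\HH_2}$, Theorem~\ref{thm:lax-milgram} for $\PP^L$, and the identity $\langle\varphi,L(\1_\CC F)\rangle=\B^\CC(\varphi\big\vert_\CC,F\big\vert_\CC)$ (with $\doublebar{\varphi\big\vert_\CC}_{\HH_1^\CC}\leq\doublebar{\varphi}_{\HH_1}$ and $\doublebar{F\big\vert_\CC}_{\HH_2^\CC}\leq\doublebar{F}_{\HH_2}$) to get
\begin{equation*}
\doublebar{\D_\Omega^\B\arr f}_{\HH_2^\Omega}\leq\doublebar{\PP^L(L(\1_\CC F))}_{\HH_2}\leq\tfrac1\lambda\doublebar{L(\1_\CC F)}_{\HH_1^*}\leq\tfrac{\doublebar{\B^\CC}}{\lambda}\doublebar{F}_{\HH_2};
\end{equation*}
taking the infimum over all $F\in\HH_2$ with $\Tr_2 F=\arr f$ yields $\doublebar{\D_\Omega^\B\arr f}_{\HH_2^\Omega}\leq\tfrac{\doublebar{\B^\CC}}{\lambda}\doublebar{\arr f}_{\DD_2}$, and the bound for $\D_\CC^\B$ is the same computation with $\Omega$ and $\CC$ interchanged. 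The only step calling for insight is the cancellation yielding the equivalent formula: a direct estimate of~\eqref{eqn:D:+} gives only the non-sharp bound $(1+\doublebar{\B^\Omega}/\lambda)\doublebar{\arr f}_{\DD_2}$ and does not by itself establish well-definedness, but once the absorbed formula is in hand both conclusions are immediate, the rest being routine bookkeeping with the definitions and the coercivity estimate.
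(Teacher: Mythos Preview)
Your proof is correct and follows essentially the same approach as the paper: both establish $\PP^L(LF)=F$ and $LF=L(\1_\Omega F)+L(\1_\CC F)$ to derive the alternative formula $\D_\Omega^\B\arr f=-\PP^L(L(\1_\CC F))\big\vert_\Omega$, then use Condition~\eqref{cond:trace:extension} together with the fact that the two formulas depend only on $F\big\vert_\Omega$ and $F\big\vert_\CC$ respectively to prove well-definedness, and finally estimate via the alternative formula and the Lax--Milgram bound. Your phrasing of the well-definedness step (``one formula sees only $F\big\vert_\Omega$, the other only $F\big\vert_\CC$'') is a clean abstraction of exactly the chain of equalities the paper writes out.
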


\begin{proof}
By Theorem~\ref{thm:lax-milgram}, we have that
\begin{equation*}\doublebar{\s^L_\Omega \arr g}_{\HH_2} 
\leq 
\frac{1}{\lambda}
\doublebar{T_{\arr g}}_{\HH_1\mapsto\C}
\leq 
\frac{1}{\lambda}
\doublebar{\Tr_1}_{\HH_1\mapsto\DD_1}\doublebar{\arr g}_{\DD_1\mapsto\C}.
\end{equation*}
By definition of $\DD_1$ and $\NN_2$, $\doublebar{\Tr_1}_{\HH_1\mapsto\DD_1}=1$ and $\doublebar{\arr g}_{\DD_1\mapsto\C}=\doublebar{\arr g}_{\NN_2}$, and so $\s^L_\Omega:\NN_2\mapsto\HH_2$ is bounded with operator norm at most $1/\lambda$.

We now turn to the double layer potential. We will begin with a few properties of the Newton potential.
By definition of~$L$, if $\varphi\in\HH_1$ then $\langle \varphi, LF\rangle = \B(\varphi, F)$. By definition of~$\PP^L$, $\B(\varphi,\PP^L (LF)) = \langle\varphi, LF\rangle$. Thus, by coercivity of~$\B$,
\begin{equation}F=\PP^L(LF)\quad\text{for all }F\in\HH_2.\end{equation}

By definition of $\B^\Omega$, $\B^\CC$ and $L(\1_\Omega F)$,
\begin{align*}
\langle\varphi,LF\rangle 
&= \B(\varphi,F)
=\B^\Omega(\varphi\big\vert_\Omega,F\big\vert_\Omega)
+\B^\CC(\varphi\big\vert_{\CC},F\big\vert_{\CC})&
\\&=
\langle\varphi, L(F\1_\Omega)\rangle + \langle \varphi, L(F\1_{\CC})\rangle
&\text{for all $\varphi\in\HH_1$}.\end{align*}
Thus, $LF=L(F\1_\Omega)+L(F\1_{\CC})$ and so
\begin{align}
\label{eqn:D:alternate:extensions}
-F + \PP^L (L(F\1_\Omega))
&=  -F + \PP^L (LF) - \PP^L (F\1_{\CC})
\\\nonumber&=  - \PP^L (L(F\1_{\CC}))
.\end{align}

In particular, suppose that $\arr f=\Tr_2 F=\Tr_2 F'$. By Condition~\eqref{cond:trace:extension}, there is some $F''\in \HH_2$ such that $F''\big\vert_\Omega=F\big\vert_\Omega$ and $F''\big\vert_{\CC}=F'\big\vert_{\CC}$.  Then
\begin{align*} -F\big\vert_\Omega + \PP^L (L(\1_\Omega F))\big\vert_\Omega
&= -F''\big\vert_\Omega + \PP^L (L(\1_\Omega F''))\big\vert_\Omega
=  - \PP^L (L(F''\1_{\CC}))\big\vert_\Omega
\\&=  - \PP^L (L(F'\1_{\CC}))\big\vert_\Omega
= -F'\big\vert_\Omega + \PP^L (L(\1_\Omega F'))\big\vert_\Omega
\end{align*}
and so $\D_\Omega^\B \arr f$ is well-defined, that is, depends only on $\arr f$ and not the choice of function $F$ with $\Tr_2 F=\arr f$. 

Furthermore, we have the alternative formula
\begin{equation}\label{eqn:D:alternate}
\D_\Omega^\B \arr f = - \PP^L (L(\1_{\CC} F))\big\vert_\Omega
\qquad\text{if $\Tr_2 F=\arr f$}
.\end{equation}

Thus,
\begin{equation*}\doublebar{\D_\Omega^B\arr f}_{\HH_2^\Omega}
\leq
\inf_{\Tr_2 F=\arr f} \doublebar{\PP^L (L(\1_{\CC} F))\big\vert_\Omega}_{\HH_2^\Omega}
\leq
\inf_{\Tr F=\arr f} \doublebar{\PP^L (L(\1_{\CC} F))}_{\HH_2}.
\end{equation*}
by definition of the $\HH_2^\Omega$-norm.

By Theorem~\ref{thm:lax-milgram} and definition of $\PP^L$, we have that
\begin{equation*}\doublebar{\PP^L (L(\1_{\CC} F))}_{\HH_2}\leq \frac{1}{\lambda} \doublebar{L(\1_{\CC} F)}_{\HH_1\mapsto\C}.\end{equation*}
Since $L(\1_{\CC} F)(\varphi) = \B^\CC(\varphi\big\vert_{\CC}, F\big\vert_{\CC})$, we have that \begin{equation*}\doublebar{L(\1_{\CC} F)}_{\HH_1\mapsto\C}\leq \doublebar{\B^\CC}\doublebar{F\big\vert_{\CC}}_{\HH_2^\CC}
\leq \doublebar{\B^\CC}\doublebar{F}_{\HH_2}\end{equation*}
and so
\begin{equation*}\doublebar{\D_\Omega^B\arr f\big\vert_\Omega}_{\HH_2^\Omega}
\leq
\inf_{\Tr F=\arr f} \frac{1}{\lambda} \doublebar{\B^\CC}\doublebar{F}_{\HH_2}
=\frac{1}{\lambda} \doublebar{\B^\CC} \doublebar{\arr f}_{\DD_2}
\end{equation*}
as desired.
\end{proof}


\section{Properties of layer potentials}
\label{sec:properties}

We will begin this section by showing that layer potentials are solutions to the equation $(Lu)\big\vert_\Omega=0$ (Lemma~\ref{lem:potentials:solutions}). We will then prove the Green's formula (Lemma~\ref{lem:green}), the adjoint formulas for layer potentials (Lemma~\ref{lem:adjoint}), and conclude this section by proving the jump relations for layer potentials (Lemma~\ref{lem:jump}).

\begin{lem}\label{lem:potentials:solutions} Let $\arr f\in\DD$, $\arr g\in\NN$, and let $u=\D^\B_\Omega\arr f$ or $u=\s^L_\Omega\arr g\big\vert_\Omega$. Then
$(Lu)\big\vert_\Omega=0$.
\end{lem}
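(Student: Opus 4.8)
The goal is to show that the two layer potentials satisfy the interior equation $(Lu)\big\vert_\Omega = 0$, where this is interpreted in the weak sense of \eqref{dfn:L:interior}: that is, for every $\varphi \in \HH_1$ with $\Tr_1\varphi = 0$, we must verify $\B^\Omega(\varphi\big\vert_\Omega, u) = 0$.

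For the single layer potential $u = \s^L_\Omega\arr g\big\vert_\Omega$: the defining relation \eqref{eqn:S} says $\B(\varphi, \s^L_\Omega\arr g) = \langle \Tr_1\varphi, \arr g\rangle$ for all $\varphi \in \HH_1$. By the locality condition \eqref{cond:local}, $\B(\varphi, \s^L_\Omega\arr g) = \B^\Omega(\varphi\big\vert_\Omega, \s^L_\Omega\arr g\big\vert_\Omega) + \B^\CC(\varphi\big\vert_\CC, \s^L_\Omega\arr g\big\vert_\CC)$. The plan is to take $\varphi$ with $\Tr_1\varphi = 0$, so the right-hand side $\langle \Tr_1\varphi,\arr g\rangle$ vanishes, and then I would need to kill the exterior term $\B^\CC(\varphi\big\vert_\CC, \cdot)$. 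This is exactly where the trace-extension property \eqref{cond:trace:extension} enters: since $\Tr_1\varphi = 0 = \Tr_1 0$, there is a $w \in \HH_1$ with $w\big\vert_\Omega = \varphi\big\vert_\Omega$, $w\big\vert_\CC = 0\big\vert_\CC$, and $\Tr_1 w = 0$. Applying \eqref{eqn:S} and \eqref{cond:local} to $w$ in place of $\varphi$ gives $\B^\Omega(\varphi\big\vert_\Omega, \s^L_\Omega\arr g\big\vert_\Omega) = \B^\Omega(w\big\vert_\Omega, \s^L_\Omega\arr g\big\vert_\Omega) = \B(w, \s^L_\Omega\arr g) = \langle \Tr_1 w, \arr g\rangle = 0$, which is the claim.

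For the double layer potential $u = \D^\B_\Omega\arr f = -F\big\vert_\Omega + \PP^L(L(\1_\Omega F))\big\vert_\Omega$ where $\Tr_2 F = \arr f$: I would again test against $\varphi \in \HH_1$ with $\Tr_1\varphi = 0$ and, as above, replace $\varphi$ by $w \in \HH_1$ with $w\big\vert_\Omega = \varphi\big\vert_\Omega$, $w\big\vert_\CC = 0$, and $\Tr_1 w = 0$ (legitimate since locality lets us compute $\B^\Omega(\varphi\big\vert_\Omega,\cdot)$ via any such $w$). Then $\B^\Omega(\varphi\big\vert_\Omega, u) = \B^\Omega(w\big\vert_\Omega, u)$; expanding $u$ linearly, this is $-\B^\Omega(w\big\vert_\Omega, F\big\vert_\Omega) + \B^\Omega(w\big\vert_\Omega, \PP^L(L(\1_\Omega F))\big\vert_\Omega)$. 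Since $w\big\vert_\CC = 0$, the locality condition gives $\B^\Omega(w\big\vert_\Omega, G\big\vert_\Omega) = \B(w, G)$ for any $G\in\HH_2$; applying this with $G = F$ and with $G = \PP^L(L(\1_\Omega F))$, and then using the definition \eqref{dfn:L:singular} of $L(\1_\Omega F)$ together with the defining relation \eqref{eqn:newton} of the Newton potential, we get $\B(w, \PP^L(L(\1_\Omega F))) = \langle w, L(\1_\Omega F)\rangle = \B^\Omega(w\big\vert_\Omega, F) = \B(w, F)$. Hence the two terms cancel and $\B^\Omega(\varphi\big\vert_\Omega, u) = 0$.

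The argument is essentially bookkeeping with the three structural conditions \eqref{cond:coercive}, \eqref{cond:local}, \eqref{cond:trace:extension} and the definitions; there is no real analytic obstacle. The one point requiring care — and the only step I would call the "main" one — is the repeated use of \eqref{cond:trace:extension} to pass from a general test function $\varphi$ vanishing in trace to a test function that additionally vanishes (restricts to $0$) on $\CC$, so that the exterior contribution of $\B$ drops out and the interior bilinear form coincides with the global one; one must also be slightly careful that $\PP^L(L(\1_\Omega F))$ lies in $\HH_2$ (so that $(\,\cdot\,)\big\vert_\Omega$ makes sense and the locality identity applies) rather than merely in $\HH_2^\Omega$, which is immediate from the definition of $\PP^L$.
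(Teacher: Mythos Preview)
Your proof is correct and follows essentially the same strategy as the paper: use condition~\eqref{cond:trace:extension} to replace a test function with zero trace by one that additionally restricts to $0$ on~$\CC$, so that locality collapses $\B^\Omega$ to~$\B$ and the defining relations \eqref{eqn:S}, \eqref{eqn:newton}, \eqref{dfn:L:singular} do the rest. The only organizational difference is that for the double layer the paper first proves the identity $\B^\Omega(\varphi\big\vert_\Omega,\D^\B_\Omega\arr f)-\B^\CC(\varphi\big\vert_\CC,\D^\B_\CC\arr f)=0$ for \emph{all} $\varphi\in\HH_1$ (bringing in $\D^\B_\CC$ via formula~\eqref{eqn:D:alternate}) and then specializes, because that identity is reused later for the continuity relation~\eqref{eqn:D:cts}; you instead go straight to the special $w$ with $w\big\vert_\CC=0$, which is slightly more direct for this lemma alone.
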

\begin{proof}
Recall that $(Lu)\big\vert_\Omega=0$ if $\B^\Omega(\varphi_+\big\vert_\Omega,u)=0$ for all $\varphi_+\in\HH_1$ with $\Tr_1\varphi_+=0$.
If $\Tr_1\varphi_+=0=\Tr_1 0$, then by Condition~\eqref{cond:trace:extension} there is some $\varphi\in\HH_1$ with $\varphi\big\vert_\Omega=\varphi_+$, $\varphi\big\vert_{\CC} = 0$ and $\Tr_1\varphi=0$.

By the definition~\eqref{eqn:S} of the single layer potential,
\begin{equation*}0=\B(\varphi,\s^L\arr g)
=\B^\Omega(\varphi\big\vert_\Omega,\s^L_\Omega\arr g\big\vert_\Omega)
+\B^\CC(\varphi\big\vert_{\CC},\s^L_\Omega\arr g\big\vert_{\CC})
=\B^\Omega(\varphi_+\big\vert_\Omega,\s^L_\Omega\arr g\big\vert_\Omega)
\end{equation*}
as desired. 

Turning to the double layer potential,  if $\varphi\in\HH_1$, then by the definition~\eqref{eqn:D:+} of $\D_\Omega^\B$, formula~\eqref{eqn:D:alternate} for~$\D_\CC^\B$ and linearity of~$\B^\Omega$,
\begin{align*}
\B^\Omega(\varphi\big\vert_\Omega,\D^\B_\Omega \arr f) 
&= -\B^\Omega\bigl(\varphi\big\vert_\Omega, F\big\vert_\Omega\bigr)
+\B^\Omega\bigl(\varphi\big\vert_\Omega, \PP^L(L(\1_\Omega F))\big\vert_\Omega\bigr)
,\\
\B^\CC(\varphi\big\vert_\CC,\D^\B_\CC \arr f\big\vert_{\CC})
&=-\B^\CC\bigl(\varphi\big\vert_{\CC}, \PP^L(L(\1_\Omega F))\big\vert_{\CC}\bigr)
.\end{align*}
Subtracting and applying Condition~\eqref{cond:local},
\begin{align*}
\B^\Omega(\varphi\big\vert_\Omega,\D^\B_\Omega \arr f) 
-\B^\CC(\varphi\big\vert_\CC,\D^\B_\CC \arr f\big\vert_{\CC})
&= -\B^\Omega\bigl(\varphi\big\vert_\Omega, F\big\vert_\Omega\bigr)
+\B\bigl(\varphi, \PP^L(L(\1_\Omega F))\bigr)
.\end{align*}

By the definition~\eqref{eqn:newton} of $\PP^L$,
\begin{equation*}\B\bigl(\varphi, \PP^L(L(\1_\Omega F))\bigr) = \langle \varphi, L(\1_\Omega F)\rangle\end{equation*}
and by the definition~\eqref{dfn:L:singular} of $L(\1_\Omega F)$,
\begin{equation*}\B\bigl(\varphi, \PP^L(L(\1_\Omega F))\bigr) = \B^\Omega(\varphi\big\vert_\Omega,F\big\vert_\Omega).\end{equation*}

Thus, 
\begin{align}\label{eqn:D:solution}
\B^\Omega(\varphi\big\vert_\Omega,\D^\B_\Omega \arr f) 
-\B^\CC(\varphi\big\vert_\CC,\D^\B_\CC \arr f)
&= 0
\quad\text{for all $\varphi\in\HH_1$.}
\end{align}
In particular, as before if $\Tr_1 \varphi_+=0$ then there is some $\varphi$ with $\varphi\big\vert_\Omega=\varphi_+\big\vert_\Omega$, $\varphi\big\vert_\CC=0$ and so $\B^\Omega(\varphi\big\vert_\Omega,\D^\B_\Omega \arr f)=0$. This completes the proof.
\end{proof}

\begin{lem}\label{lem:green}
If $u\in\HH_2^\Omega$ and $(Lu)\big\vert_\Omega=0$, then 
\begin{equation*}u = -\D^\B_\Omega (\Tr_2 U) + \s^L_\Omega (\M^\B_\Omega u)\big\vert_\Omega,\quad
0 = \D^\B_\CC (\Tr_2 U) + \s^L_\CC (\M^\B_\Omega u)\big\vert_{\CC}\end{equation*}
for any $U\in\HH_2$ with $U\big\vert_\Omega=u$.
\end{lem}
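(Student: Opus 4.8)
The plan is to test both identities against an arbitrary $\varphi\in\HH_1$ using the bilinear form $\B$, and then appeal to coercivity~\eqref{cond:coercive} to conclude. Fix $U\in\HH_2$ with $U\big\vert_\Omega=u$, write $\arr f=\Tr_2 U\in\DD_2$, and write $\arr g=\M^\B_\Omega u\in\NN_2$. Set $v=-\D^\B_\Omega\arr f+\s^L_\Omega\arr g\big\vert_\Omega$ and $w=\D^\B_\CC\arr f+\s^L_\CC\arr g\big\vert_\CC$; note $v\in\HH_2^\Omega$ and $w\in\HH_2^\CC$. First I would compute $\B^\Omega(\varphi\big\vert_\Omega,v)$ and $\B^\CC(\varphi\big\vert_\CC,w)$ separately. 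For the single-layer pieces, using the defining relation~\eqref{eqn:S} together with the locality condition~\eqref{cond:local} and the fact (from Lemma~\ref{lem:potentials:solutions}) that $\s^L_\Omega\arr g$ solves $(L\cdot)\big\vert_\Omega=0$ on both $\Omega$ and $\CC$, I would split $\B(\varphi,\s^L\arr g)=\langle\Tr_1\varphi,\arr g\rangle$ into its $\Omega$ and $\CC$ contributions. For the double-layer pieces I would use the identity~\eqref{eqn:D:solution} established in the proof of Lemma~\ref{lem:potentials:solutions}, namely $\B^\Omega(\varphi\big\vert_\Omega,\D^\B_\Omega\arr f)=\B^\CC(\varphi\big\vert_\CC,\D^\B_\CC\arr f)$, so that the two double-layer contributions cancel when the $\Omega$- and $\CC$-equations are combined.

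Concretely, I expect that adding the two candidate equations and pairing with $\varphi$ yields
\[
\B^\Omega(\varphi\big\vert_\Omega,v-u)+\B^\CC(\varphi\big\vert_\CC,w)
=\B^\Omega(\varphi\big\vert_\Omega,v)-\B^\Omega(\varphi\big\vert_\Omega,u)+\B^\CC(\varphi\big\vert_\CC,w),
\]
and the right-hand side should collapse: the $\s^L$ terms assemble via~\eqref{cond:local} and~\eqref{eqn:S} into $\langle\Tr_1\varphi,\arr g\rangle$, which by the definition~\eqref{eqn:Neumann} of $\M^\B_\Omega$ equals $\B^\Omega(\varphi\big\vert_\Omega,u)$ (here one uses $(Lu)\big\vert_\Omega=0$, so $\M^\B_\Omega u$ is well defined and $\langle\Tr_1\varphi,\M^\B_\Omega u\rangle=\B^\Omega(\varphi\big\vert_\Omega,u)$); the $\D^\B$ terms cancel by~\eqref{eqn:D:solution}; and the $-\B^\Omega(\varphi\big\vert_\Omega,u)$ term kills what remains. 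This shows that the combined quantity pairs to zero against every $\varphi\in\HH_1$. To separate this single vanishing statement into the two asserted identities (one supported on $\Omega$, one on $\CC$), I would restrict to test functions $\varphi$ that, by Condition~\eqref{cond:trace:extension}, can be chosen to vanish on $\CC$ (forcing $v=u$, i.e. the first identity after checking $\B^\Omega$-coercivity on the relevant subspace) and, symmetrically, to vanish on $\Omega$ (forcing $w=0$). Finally, to upgrade "$\B^\Omega(\varphi\big\vert_\Omega,v-u)=0$ for all suitable $\varphi$" to "$v=u$ in $\HH_2^\Omega$", I would invoke coercivity~\eqref{cond:coercive}: since $v-u\in\HH_2^\Omega$ and, by Lemma~\ref{lem:potentials:solutions}, both $v$ and $u$ satisfy $(L\cdot)\big\vert_\Omega=0$, the element $v-u$ is represented by some $W\in\HH_2$ whose pairing $\B(\psi,W)$ against all $\psi\in\HH_1$ vanishes, whence $W=0$ by~\eqref{cond:coercive} and thus $v-u=W\big\vert_\Omega=0$; the $\CC$-identity follows the same way.

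The main obstacle I anticipate is the bookkeeping around coercivity when passing from "$\B^\Omega(\varphi\big\vert_\Omega,v-u)=0$" to "$v-u=0$ in $\HH_2^\Omega$": coercivity~\eqref{cond:coercive} is a statement about $\B$ on all of $\HH_1\times\HH_2$, not about $\B^\Omega$ on $\HH_2^\Omega$, so one must carefully produce a genuine element of $\HH_2$ representing $v-u$ and a test family in $\HH_1$ exhausting enough of $\HH_1$ to apply~\eqref{cond:coercive}. The tool for this is exactly Condition~\eqref{cond:trace:extension} (to realize restrictions as global functions with controlled traces) combined with the decomposition $\B=\B^\Omega+\B^\CC$; getting the quantifiers right here — which $\varphi$ one is allowed to use, and verifying that the $\CC$-part contributes nothing — is the delicate point. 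A secondary, purely notational, nuisance is that $v$ and $w$ live in different spaces ($\HH_2^\Omega$ versus $\HH_2^\CC$), so the "adding the equations" step should really be phrased as: prove the $\CC$-identity first by testing against $\varphi$ supported in $\CC$, then substitute it back to isolate the $\Omega$-identity.
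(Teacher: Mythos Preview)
Your combined identity $\B^\Omega(\varphi\big\vert_\Omega,v-u)+\B^\CC(\varphi\big\vert_\CC,w)=0$ for all $\varphi\in\HH_1$ is correct, and the computation leading to it (via~\eqref{eqn:S}, \eqref{eqn:Neumann}, and~\eqref{eqn:D:solution}) is fine. The gap is precisely where you flagged it: the separation step does not go through as written. Restricting to $\varphi$ with $\varphi\big\vert_\CC=0$ via Condition~\eqref{cond:trace:extension} forces $\Tr_1\varphi=0$, so you only learn $\B^\Omega(\varphi\big\vert_\Omega,v-u)=0$ for trace-zero test functions, i.e.\ $(L(v-u))\big\vert_\Omega=0$; that is already known from Lemma~\ref{lem:potentials:solutions} and is far weaker than $v-u=0$. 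Your proposed patch---extend $v-u$ to some $W\in\HH_2$ and invoke global coercivity---fails for an arbitrary extension, since $\B(\psi,W)=\B^\Omega(\psi\big\vert_\Omega,v-u)+\B^\CC(\psi\big\vert_\CC,W\big\vert_\CC)$ and you have no control over the second term. Nothing in Section~\ref{sec:dfn} gives coercivity of $\B^\Omega$ on any subspace of $\HH_2^\Omega$, so the phrase ``$\B^\Omega$-coercivity on the relevant subspace'' is not backed by the hypotheses.

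What actually works, and is what the paper does, is to observe that a \emph{specific} global element does the job. Unfold definition~\eqref{eqn:D:+} of $\D^\B_\Omega\arr f$ with the choice $F=U$ to get $-\D^\B_\Omega\arr f=u-\PP^L(L(\1_\Omega u))\big\vert_\Omega$, and use formula~\eqref{eqn:D:alternate} to get $\D^\B_\CC\arr f=-\PP^L(L(\1_\Omega u))\big\vert_\CC$. Both asserted identities then reduce to the single global statement $\PP^L(L(\1_\Omega u))=\s^L_\Omega(\M^\B_\Omega u)$ in $\HH_2$, which follows from one application of coercivity~\eqref{cond:coercive} after checking that $\B(\varphi,\cdot)$ agrees on both sides for all $\varphi\in\HH_1$ (via~\eqref{eqn:newton}, \eqref{dfn:L:singular}, \eqref{eqn:S}, \eqref{eqn:Neumann}). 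In your language: $v-u$ and $w$ are restrictions of the \emph{same} element $\s^L_\Omega\arr g-\PP^L(L(\1_\Omega u))\in\HH_2$, so your combined identity is already a global $\B$-pairing and coercivity applies directly---no separation is needed at all.
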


\begin{proof}
By definition~\eqref{eqn:D:+} of the double layer potential,
\begin{equation*}
-\D_\Omega^\B (\Tr_2 U)
= U\big\vert_\Omega - \PP^L (L(\1_\Omega U))\big\vert_\Omega
= u - \PP^L (L(\1_\Omega u))\big\vert_\Omega
\end{equation*}
and by formula~\eqref{eqn:D:alternate}
\begin{equation*}\D_\CC^\B (\Tr_2 U) =  -\PP^L (L(\1_\Omega u))\big\vert_{\CC}.\end{equation*}
It suffices to show that $\PP^L(L(\1_\Omega u))=\s^L_\Omega(\M^\B_\Omega u)$.

Let $\varphi\in\HH_1$. By formulas~\eqref{eqn:S} and~\eqref{eqn:Neumann},
\begin{equation*}\B(\varphi,\s^L_\Omega (\M^\B_\Omega u))
=\langle \Tr_1 \varphi,\M^\B_\Omega u\rangle
=\B^\Omega(\varphi\big\vert_\Omega,u)
.\end{equation*}
By formula~\eqref{eqn:newton} for the Newton potential
and by the definition~\eqref{dfn:L:singular} of $L(\1_\Omega u)$,
\begin{equation*}
\B(\varphi, \PP^L(L(\1_\Omega u)))
= \langle \varphi, L(\1_\Omega u)\rangle
=\B^\Omega(\varphi\big\vert_\Omega,u)
.\end{equation*}
Thus, $\B(\varphi, \PP^L(L(\1_\Omega u))) = \B(\varphi,\s^L_\Omega (\M^\B_\Omega u))$ for all $\varphi\in\HH$; by coercivity of $\B$, we must have that $\PP^L(L(\1_\Omega u))=\s^L_\Omega(\M^\B_\Omega u)$. This completes the proof.
\end{proof}

Let $\B^*(\varphi,\psi)=\overline{\B(\psi,\varphi)}$ and define $\B^\Omega_*$, $\B^\CC_*$ analogously. Then $\B^*$ is a bounded and coercive operator $\HH_2\times \HH_1\mapsto\C$, and so we can define the double and single layer potentials $\D^{\B^*}_\Omega:\DD_1\mapsto \HH_1^\Omega$, $\s^{L^*}_\Omega:\NN_1\mapsto \HH_1$.

We then have the following adjoint relations.

\begin{lem}\label{lem:adjoint}
We have the adjoint relations
\begin{align}
\label{eqn:neumann:D:dual}
\langle \arr \varphi, \M_\B^{\Omega} \D^{\B}_\Omega \arr f\rangle 
&= \langle\M_{\B^*}^{\Omega} \D^{\B^*}_\Omega  \arr \varphi, \arr f\rangle
,\\
\label{eqn:dirichlet:S:dual}
\langle \arr \gamma, \Tr_2 \s^L_\Omega \arr g\rangle
&= \langle \Tr_1 \s^{L^*}_\Omega \arr \gamma, \arr g\rangle
\end{align}
for all $\arr f\in \DD_2$, $\arr \varphi\in\DD_1$, $\arr g\in\NN_2$ and $\arr\gamma\in\NN_1$.

If we let $\Tr_2^\Omega \D^{\B}_\Omega \arr f = -\Tr_2 F + \Tr_2 \PP^L(L(\1_\Omega F))$, where $F$ is as in formula~\eqref{eqn:D:+}, then $\Tr_2^\Omega \D^{\B}_\Omega \arr f $ does not depend on the choice of $F$, and we have the duality relations
\begin{align}\label{eqn:dirichlet:D:dual}
\langle \arr \gamma, \Tr_2^\Omega \D^{\B}_\Omega \arr f\rangle
&= \langle-\arr \gamma+\M_{\B^*}^{\Omega}\s^{L^*}_\Omega\arr \gamma, \arr f\rangle
.\end{align}
\end{lem}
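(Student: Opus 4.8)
\textbf{Proof proposal for Lemma~\ref{lem:adjoint}.}

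The plan is to prove all three identities by the same mechanism: expand each pairing using the defining formulas \eqref{eqn:S}, \eqref{eqn:newton}, \eqref{eqn:Neumann}, \eqref{eqn:D:+}, and \eqref{dfn:L:singular}, and reduce everything to the single symmetric quantity $\B^\Omega(\varphi\big\vert_\Omega, \psi\big\vert_\Omega)$ (respectively $\B^\CC$), which is manifestly compatible with the passage $\B \leftrightarrow \B^*$ since $\B^*(\varphi,\psi)=\overline{\B(\psi,\varphi)}$. Throughout I will use the key factorization established inside the proof of Lemma~\ref{lem:green}, namely $\PP^L(L(\1_\Omega u)) = \s^L_\Omega(\M^\B_\Omega u)$, together with its obvious $\B^*$-analogue, and the identity $F=\PP^L(LF)$ from the proof of Lemma~\ref{lem:potentials:bounded}.

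For \eqref{eqn:dirichlet:S:dual}: pick $G\in\HH_2$ with $\Tr_2 G=\arr\gamma$ and $H\in\HH_1$ with $\Tr_1 H = \arr\gamma$ appropriately (or rather, lift $\arr\gamma\in\NN_1=\DD_2^*$ by testing). Actually the cleanest route is to write, using \eqref{eqn:S} for $\s^{L^*}_\Omega$ applied to $\arr\gamma\in\NN_1$ with test function $\varphi=\s^L_\Omega\arr g\in\HH_2$: $\langle\Tr_2\s^L_\Omega\arr g,\arr\gamma\rangle = \B^*(\s^L_\Omega\arr g,\s^{L^*}_\Omega\arr\gamma) = \overline{\B(\s^{L^*}_\Omega\arr\gamma,\s^L_\Omega\arr g)}$, and then apply \eqref{eqn:S} for $\s^L_\Omega$ with test function $\varphi=\s^{L^*}_\Omega\arr\gamma\in\HH_1$ to get $\overline{\langle\Tr_1\s^{L^*}_\Omega\arr\gamma,\arr g\rangle}$; unravelling the conjugate on the sesquilinear pairing gives \eqref{eqn:dirichlet:S:dual}. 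For \eqref{eqn:neumann:D:dual}: by definition of $\M^\Omega_\B$ in \eqref{eqn:Neumann}, $\langle\arr\varphi,\M^\Omega_\B\D^\B_\Omega\arr f\rangle = \B^\Omega(\Phi\big\vert_\Omega, \D^\B_\Omega\arr f)$ for any $\Phi\in\HH_1$ with $\Tr_1\Phi=\arr\varphi$; but $\D^\B_\Omega\arr f$ solves $(L\cdot)\big\vert_\Omega=0$ by Lemma~\ref{lem:potentials:solutions}, so Green's formula (Lemma~\ref{lem:green}) lets me replace $\D^\B_\Omega\arr f$ by $-\D^\B_\Omega(\Tr_2\D^\B_\Omega\arr f)+\s^L_\Omega(\M^\B_\Omega\D^\B_\Omega\arr f)\big\vert_\Omega$ --- but more directly I will use the factorization $\PP^L(L(\1_\Omega\cdot))=\s^L_\Omega(\M^\B_\Omega\cdot)$ and the alternate formula \eqref{eqn:D:alternate} to rewrite $\B^\Omega(\Phi\big\vert_\Omega,\D^\B_\Omega\arr f)$ in terms of $\B^\CC$ evaluated at extensions of $\arr\varphi$ and $\arr f$, then recognize the resulting expression as symmetric under $\B\leftrightarrow\B^*$ and $\arr f\leftrightarrow\arr\varphi$. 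For \eqref{eqn:dirichlet:D:dual}: first note $\Tr_2^\Omega\D^\B_\Omega\arr f$ is well defined by exactly the argument in Lemma~\ref{lem:potentials:bounded} (two extensions of $\arr f$ agreeing on $\Omega$ after \eqref{cond:trace:extension} yield the same value, since $\Tr_2$ is linear and the difference lies in the kernel); then pair against $\arr\gamma\in\NN_1$ using $\Tr_2^\Omega\D^\B_\Omega\arr f = -\arr f + \Tr_2\PP^L(L(\1_\Omega F)) = -\arr f + \Tr_2\s^L_\Omega(\M^\B_\Omega(F\big\vert_\Omega))$ and apply \eqref{eqn:dirichlet:S:dual} together with the relation between $\M^\B_\Omega$ and the data defining $\D^{\B^*}_\Omega$.

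The step I expect to be the main obstacle is \eqref{eqn:neumann:D:dual}, specifically writing the pairing in a form that is visibly symmetric. The difficulty is that $\D^\B_\Omega\arr f$ is only an element of $\HH^\Omega_2$, not of $\HH_2$, so $\B^\Omega(\Phi\big\vert_\Omega,\D^\B_\Omega\arr f)$ cannot be split via \eqref{cond:local} directly; one must first pass, via \eqref{cond:trace:extension}, to a genuine extension and use \eqref{eqn:D:alternate:extensions}. Concretely, I expect to choose extensions $F$ of $\arr f$ and $\Phi$ of $\arr\varphi$ with $F\big\vert_\CC$ and $\Phi\big\vert_\CC$ chosen so that everything reduces to $\B^\CC(\Phi\big\vert_\CC,\PP^L(L(\1_\Omega F))\big\vert_\CC)$ (using \eqref{eqn:D:alternate} for $\D^\B_\CC$ and equation \eqref{eqn:D:solution} from the proof of Lemma~\ref{lem:potentials:solutions}, which says $\B^\Omega(\Phi\big\vert_\Omega,\D^\B_\Omega\arr f)=\B^\CC(\Phi\big\vert_\CC,\D^\B_\CC\arr f)$), and then apply $\PP^L(L(\1_\Omega F))=\s^L_\Omega(\M^\B_\Omega F)$ to reach a quantity of the shape $\B$ evaluated on two single layer potentials, which pairs symmetrically by the same conjugation trick as in \eqref{eqn:dirichlet:S:dual}. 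Once the bookkeeping of which restriction lives where is done correctly, each identity collapses in a line or two; the rest is routine.
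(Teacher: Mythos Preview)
Your argument for \eqref{eqn:dirichlet:S:dual} is correct and essentially identical to the paper's. Your well-definedness argument for $\Tr_2^\Omega\D^\B_\Omega\arr f$ via the reasoning of Lemma~\ref{lem:potentials:bounded} is also fine, though the paper instead obtains well-definedness as a byproduct of the duality computation itself.

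The gap is in your treatment of \eqref{eqn:neumann:D:dual} and \eqref{eqn:dirichlet:D:dual}. You repeatedly invoke the factorization $\PP^L(L(\1_\Omega F))=\s^L_\Omega(\M^\B_\Omega F)$ for an arbitrary extension $F\in\HH_2$ of~$\arr f$. But $\M^\B_\Omega$ is defined by formula~\eqref{eqn:Neumann} \emph{only} for $u\in\HH_2^\Omega$ with $(Lu)\big\vert_\Omega=0$; this is precisely the hypothesis under which the identity was proved inside Lemma~\ref{lem:green}. A generic extension $F$ of $\arr f$ has no reason to satisfy $(LF)\big\vert_\Omega=0$, so $\M^\B_\Omega(F\big\vert_\Omega)$ is simply undefined and the identity cannot be applied. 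Your suggested freedom in choosing $F\big\vert_\CC$ does not help, since the obstruction lives on the $\Omega$ side. One could try to pick $F$ so that $F\big\vert_\Omega$ solves the interior Dirichlet problem, but you do not do this, and even then you would still need to relate the resulting Neumann data $\M^\B_\Omega(F\big\vert_\Omega)$ back to $\arr f$ alone, which is an extra step you have not addressed.

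The paper sidesteps this entirely by never passing to single layer potentials. For \eqref{eqn:neumann:D:dual} it expands $\B^\Omega(\Phi\big\vert_\Omega,\D^\B_\Omega\arr f)$ directly via \eqref{eqn:D:+}, then uses the pair \eqref{dfn:L:singular} and \eqref{eqn:newton} (applied to~$\B^*$) to convert $\B^\Omega_*(\PP^L(L(\1_\Omega F))\big\vert_\Omega,\Phi\big\vert_\Omega)$ into $\B^*(\PP^L(L(\1_\Omega F)),\PP^{L^*}(L^*(\1_\Omega\Phi)))$, which is visibly symmetric in the roles of $F$ and~$\Phi$ under $\B\leftrightarrow\B^*$. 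For \eqref{eqn:dirichlet:D:dual} it similarly keeps $\PP^L(L(\1_\Omega F))$ as a Newton potential and applies \eqref{eqn:S} for $\s^{L^*}_\Omega$, then \eqref{eqn:newton}, \eqref{dfn:L:singular}, and finally \eqref{eqn:Neumann}, using that $(L^*\s^{L^*}_\Omega\arr\gamma)\big\vert_\Omega=0$ so that $\M^\Omega_{\B^*}$ \emph{is} legitimately defined on $\s^{L^*}_\Omega\arr\gamma\big\vert_\Omega$. The moral: keep the Newton potentials and push the $\M$ operator onto the object that genuinely solves the homogeneous equation.
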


\begin{proof}
By formula~\eqref{eqn:S},
\begin{gather*}\langle \Tr_1 \s^{L^*}_\Omega \arr \gamma, \arr g\rangle
=\B(\s^{L^*}_\Omega\arr \gamma,\s^L_\Omega\arr g\rangle,
\\
\langle \Tr_1 \s^{L}_\Omega \arr g, \arr \gamma\rangle
=\B^*(\s^{L}_\Omega\arr g,\s^{L^*}_\Omega\arr \gamma\rangle\end{gather*}
and so formula~\eqref{eqn:dirichlet:S:dual} follows by definition of~$\B^*$.

Let $\Phi\in\HH_1$ and $F\in\HH_2$ with $\Tr_1\Phi=\arr\varphi$, $\Tr_2F=\arr f$.

Then by formulas \eqref{eqn:Neumann} and~\eqref{eqn:D:+},
\begin{align*}\langle \arr \varphi, \M_\B^{\Omega} \D^{\B}_\Omega \arr f\rangle 
&=
	\B^\Omega(\Phi\big\vert_\Omega, \D^{\B}_\Omega \arr f)
=
	-\B^\Omega(\Phi\big\vert_\Omega, F\vert_\Omega)
	+\B^\Omega(\Phi\big\vert_\Omega, \PP^L(L(\1_\Omega F))\big\vert_\Omega)
\\&=
	-\overline{\B^\Omega_*(F\vert_\Omega, \Phi\big\vert_\Omega)}
	+\overline{\B^\Omega_*(\PP^L(L(\1_\Omega F))\big\vert_\Omega, \Phi\big\vert_\Omega)}
.\end{align*}
By formula~\eqref{dfn:L:singular}, 
\begin{equation*}\B^\Omega_*(\PP^L(L(\1_\Omega F))\big\vert_\Omega, \Phi\big\vert_\Omega)
= \langle \PP^L(L(\1_\Omega F)), L^*(\1_\Omega\Phi)\rangle.\end{equation*}
By formula~\eqref{eqn:newton},
\begin{equation*}\B^\Omega_*(\PP^L(L(\1_\Omega F))\big\vert_\Omega, \Phi\big\vert_\Omega)
= \B^\Omega_*( \PP^L(L(\1_\Omega F)), \PP^{L^*}(L^*(\1_\Omega\Phi))).\end{equation*}
Thus, 
\begin{align*}\langle \arr \varphi, \M_\B^{\Omega} \D^{\B}_\Omega \arr f\rangle 
&=
	-\overline{\B^\Omega_*(F\vert_\Omega, \Phi\big\vert_\Omega)}
	+\overline{\B^\Omega_*( \PP^L(L(\1_\Omega F)), \PP^{L^*}(L^*(\1_\Omega\Phi)))}
.\end{align*}
By the same argument
\begin{align*}\langle
\arr f,\M_{\B^*}^{\Omega} \D^{\B^*}_\Omega  \arr \varphi\rangle
&=
	-\overline{\B^\Omega(\Phi\vert_\Omega, F\big\vert_\Omega)}
	+\overline{\B^\Omega( \PP^{L^*}(L^*(\1_\Omega \Phi)), \PP^{L}(L(\1_\Omega F)))}
\end{align*}
and by definition of $\B^\Omega_*$ formula~\eqref{eqn:neumann:D:dual} is proven.


Finally, by definition of $\Tr_2^\Omega \D^{\B}_\Omega$,
\begin{equation*}\langle \arr \gamma, \Tr_2^\Omega \D^{\B}_\Omega \arr f\rangle
=
-\langle \arr \gamma, \Tr_2 F\rangle
+
\langle \arr \gamma, \Tr_2 \PP^L(L(\1_\Omega F)) \rangle
.\end{equation*}
By the definition~\eqref{eqn:S} of the single layer potential,
\begin{equation*}
\langle \arr \gamma, \Tr_2 \PP^L(L(\1_\Omega F)) \rangle
=
\overline{\B^*( \PP^L(L(\1_\Omega F)) ,\s^{L^*}_\Omega\arr \gamma)}
.\end{equation*}
By definition of $\B^*$ and the definition~\eqref{eqn:newton} of the Newton potential,
\begin{equation*}\overline{\B^*( \PP^L(L(\1_\Omega F)) ,\s^{L^*}_\Omega\arr \gamma)}
= {\langle \s^{L^*}_\Omega\arr \gamma, L(\1_\Omega F) \rangle}
\end{equation*}
and by the definition~\eqref{dfn:L:singular} of $L(\1_\Omega F)$,
\begin{equation*}{\langle \s^{L^*}_\Omega\arr \gamma, L(\1_\Omega F) \rangle} 
= \B^\Omega (\s^{L^*}_\Omega\arr \gamma\big\vert_\Omega, F\big\vert_\Omega).\end{equation*}
By the definition~\eqref{eqn:Neumann} of Neumann boundary values,
\begin{equation*}\overline{\B^\Omega_*(F,\s^{L^*}_\Omega\arr \gamma)} = \overline{\langle \Tr_2 F, \M^\Omega_{\B^*}(\s^{L^*}_\Omega\arr\gamma\big\vert_\Omega)\rangle}\end{equation*}
and so
\begin{equation*}\langle \arr \gamma, \Tr_2^\Omega \D^{\B}_\Omega \arr f\rangle
=
-\langle\arr\gamma,\arr f\rangle + 
{\langle \M^\Omega_{\B^*}(\s^{L^*}_\Omega\arr\gamma\big\vert_\Omega), \arr f\rangle}
\end{equation*}
for any choice of $F$. Thus $\Tr_2^\Omega \D^{\B}_\Omega $ is well-defined and formula~\eqref{eqn:dirichlet:D:dual} is valid. 
\end{proof}


\begin{lem}\label{lem:jump}
Let $\Tr_2^\Omega \D^{\B}_\Omega $ be as in Lemma~\ref{lem:adjoint}.
If $\arr f\in\DD$ and $\arr g\in\NN$, then we have the jump and continuity relations
\begin{align}
\label{eqn:D:jump}
\Tr_2^\Omega\D^\B_\Omega\arr f +\Tr_2^\CC\D^\B_\CC\arr f
	&=-\arr f
,\\
\label{eqn:S:jump}
 \M_\B^\Omega (\s^L_\Omega \arr g\big\vert_\Omega) 
+\M_\B^{\CC} (\s^L_\Omega\arr g\big\vert_{\CC})
	&=\arr g
,\\
\label{eqn:D:cts}
\M_\B^{\Omega} (\D^\B_\Omega\arr f)  - \M_\B^{\CC} (\D^\B_\CC\arr f )
	&=0
.\end{align}

If there are bounded operators $\Tr_2^\Omega:\HH_2^\Omega\mapsto\DD_2$ and $\Tr_2^\CC:\HH_2^\CC\mapsto\DD_2$ such that $\Tr_2 F = \Tr_2^\Omega (F\big\vert_\Omega)= \Tr_2^\CC (F\big\vert_\CC)$ for all $F\in\HH_2$, then in addition
\begin{align}
\label{eqn:S:cts}
\Tr_2^\Omega(\s^L_\Omega \arr g\big\vert_\Omega) -\Tr_2^\CC(\s^L_\Omega \arr g\big\vert_{\CC})
	&=0
.\end{align}
\end{lem}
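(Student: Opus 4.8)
The plan is to derive all four relations from the definitions of $\D^\B_\Omega$, $\s^L_\Omega$, and $\M^\B_\Omega$ together with the adjoint identities already established in Lemma~\ref{lem:adjoint}, rather than re-deriving everything from the Lax-Milgram construction. For the jump relation \eqref{eqn:D:jump}: fix $F\in\HH_2$ with $\Tr_2 F=\arr f$, and use the definition $\Tr_2^\Omega\D^\B_\Omega\arr f = -\Tr_2 F + \Tr_2\PP^L(L(\1_\Omega F))$ from Lemma~\ref{lem:adjoint}, together with the analogous formula for $\Tr_2^\CC\D^\B_\CC\arr f$ (using the alternative formula \eqref{eqn:D:alternate}, which gives $\D^\B_\CC\arr f = -\PP^L(L(\1_\Omega F))\big\vert_\CC$, so $\Tr_2^\CC\D^\B_\CC\arr f = -\Tr_2\PP^L(L(\1_\Omega F))$). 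Adding these, the $\PP^L(L(\1_\Omega F))$ terms cancel and we are left with $-\Tr_2 F = -\arr f$. This requires that $\Tr_2^\CC\D^\B_\CC\arr f$ be defined consistently with the same $F$; I would note that by the $\Omega\leftrightarrow\CC$ symmetry of \eqref{cond:trace:extension}, the role of the two formulas \eqref{eqn:D:+} and \eqref{eqn:D:alternate} is simply interchanged, so $\Tr_2^\CC\D^\B_\CC\arr f = -\Tr_2 F + \Tr_2\PP^L(L(\1_\CC F)) = -\Tr_2 F + \Tr_2(F - \PP^L(L(\1_\Omega F)))$, and the $\Tr_2 F$ terms are what survive after adding to $\Tr_2^\Omega\D^\B_\Omega\arr f$.

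For \eqref{eqn:S:jump}, apply the definition \eqref{eqn:Neumann} of $\M^\B_\Omega$ and $\M^\B_\CC$ to $u=\s^L_\Omega\arr g$: for $\varphi\in\HH_1$, $\langle\Tr_1\varphi, \M^\B_\Omega(\s^L_\Omega\arr g\big\vert_\Omega)\rangle + \langle\Tr_1\varphi,\M^\B_\CC(\s^L_\Omega\arr g\big\vert_\CC)\rangle = \B^\Omega(\varphi\big\vert_\Omega,\s^L_\Omega\arr g) + \B^\CC(\varphi\big\vert_\CC,\s^L_\Omega\arr g)$, which by \eqref{cond:local} equals $\B(\varphi,\s^L_\Omega\arr g)$, which by the defining formula \eqref{eqn:S} equals $\langle\Tr_1\varphi,\arr g\rangle$. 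Since this holds for all $\varphi\in\HH_1$ and $\Tr_1$ is surjective onto $\DD_1$ by construction, the two sides agree as elements of $\NN_2=\DD_1^*$. I should check that $\M^\B_\CC$ is well-defined on $\s^L_\Omega\arr g\big\vert_\CC$, i.e.\ that $(L(\s^L_\Omega\arr g))\big\vert_\CC=0$; this follows from Lemma~\ref{lem:potentials:solutions} applied with $\CC$ in place of $\Omega$, using $\s^L_\Omega\arr g=\s^L_\CC\arr g$. For the continuity relation \eqref{eqn:D:cts}, this is essentially formula \eqref{eqn:D:solution} established in the proof of Lemma~\ref{lem:potentials:solutions}: $\B^\Omega(\varphi\big\vert_\Omega,\D^\B_\Omega\arr f) = \B^\CC(\varphi\big\vert_\CC,\D^\B_\CC\arr f)$ for all $\varphi\in\HH_1$, which unwinding the definition \eqref{eqn:Neumann} of $\M^\B$ says exactly $\langle\Tr_1\varphi,\M^\B_\Omega\D^\B_\Omega\arr f\rangle = \langle\Tr_1\varphi,\M^\B_\CC\D^\B_\CC\arr f\rangle$; surjectivity of $\Tr_1$ finishes it. Again I must first check that $\M^\B_\Omega\D^\B_\Omega\arr f$ and $\M^\B_\CC\D^\B_\CC\arr f$ make sense, which is Lemma~\ref{lem:potentials:solutions} and its $\CC$-analogue.

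For the final relation \eqref{eqn:S:cts} under the additional hypothesis, the point is that when the restriction-then-trace operators $\Tr_2^\Omega,\Tr_2^\CC$ are globally defined and agree with $\Tr_2$ on all of $\HH_2$, we have $\Tr_2^\Omega(\s^L_\Omega\arr g\big\vert_\Omega) = \Tr_2(\s^L_\Omega\arr g) = \Tr_2^\CC(\s^L_\Omega\arr g\big\vert_\CC)$ directly from the hypothesis applied to $F=\s^L_\Omega\arr g\in\HH_2$; subtracting gives $0$. I expect the main obstacle to be purely bookkeeping: making sure that in \eqref{eqn:D:jump} the operator $\Tr_2^\CC\D^\B_\CC$ is unambiguously the one from Lemma~\ref{lem:adjoint} (whose well-definedness was proved there) and that the $\Omega\leftrightarrow\CC$ symmetry is invoked correctly, and in \eqref{eqn:S:jump} and \eqref{eqn:D:cts} confirming the $\CC$-side Neumann values are well-defined via Lemma~\ref{lem:potentials:solutions}. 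No deep estimate is needed; everything reduces to the bilinearity, \eqref{cond:local}, the defining relations \eqref{eqn:S}, \eqref{eqn:newton}, \eqref{eqn:Neumann}, \eqref{eqn:D:+}, \eqref{eqn:D:alternate}, \eqref{eqn:D:solution}, and surjectivity of $\Tr_1$ and $\Tr_2$.
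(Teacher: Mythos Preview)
Your proposal is correct and follows essentially the same route as the paper's proof: formula~\eqref{eqn:D:jump} via the definition of $\Tr_2^\Omega\D^\B_\Omega$ from Lemma~\ref{lem:adjoint} together with the identity~\eqref{eqn:D:alternate:extensions}; formula~\eqref{eqn:S:jump} directly from \eqref{eqn:Neumann}, \eqref{cond:local}, and~\eqref{eqn:S}; formula~\eqref{eqn:D:cts} from~\eqref{eqn:D:solution}; and formula~\eqref{eqn:S:cts} from the hypothesis applied to $F=\s^L_\Omega\arr g\in\HH_2$. Your write-up is in fact more detailed than the paper's (which dispatches each relation in a single sentence), and your explicit checks that the $\CC$-side Neumann operators are well defined are a sensible addition.
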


The given condition formula for $\Tr_2^\Omega$, $\Tr_2^\CC$ is very natural if $\Omega\subset\R^\dmn$ is an open set, $\CC=\R^\dmn\setminus\bar\Omega$ and $\Tr_2$ denotes a trace operator restricting functions to the boundary~$\partial\Omega$.
Observe that if such operators $\Tr_2^\Omega$ and $\Tr_2^\CC$ exist, then by the definition~\eqref{eqn:D:+} of the double layer potential and by the definition of $\Tr_2^\Omega \D^\B_\Omega$ in Lemma~\ref{lem:adjoint}, $\Tr_2^\Omega (\D^\B_\Omega \arr f) = (\Tr_2^\Omega \D^\B_\Omega)\arr f$ and so there is no ambiguity of notation.

\begin{proof}[Proof of Lemma~\ref{lem:jump}]
We first observe that by the definition~\eqref{eqn:Neumann} of Neumann boundary values, if $u_+\in\HH_2^\Omega$ and $u_-\in\HH_2^\CC$ with $(Lu_+)\big\vert_\Omega=0$ and $(Lu_-)\big\vert_\CC=0$, then
\begin{equation*}
\M_\B^\Omega u_+ 
+\M_\B^{\CC} u_-=\arr \psi
\text{ if and only if }
\langle\Tr_1\varphi,\arr\psi\rangle = \B^\Omega(\varphi\big\vert_\Omega,u_+)
+\B^\CC(\varphi\big\vert_{\CC},u_-)
\end{equation*}
for all $\varphi\in\HH_1$.

The continuity relation \eqref{eqn:D:cts} follows from formula~\eqref{eqn:D:solution}. 

The jump relation~\eqref{eqn:D:jump} follows from the definition of $\Tr_2^\Omega\D^\B_\Omega$ and by using formula~\eqref{eqn:D:alternate:extensions} to rewrite $\Tr_2^\CC\D^\B_\CC$.

The jump relation \eqref{eqn:S:jump} follows from the definition~\eqref{eqn:S} of the single layer potential.

The continuity relation~\eqref{eqn:S:cts} follows because $\s^L_\Omega\arr g\in \HH_2$ and by the definition of $\Tr_2^\Omega$, $\Tr_2^\CC$.
\end{proof}

%

%
%

\section{Layer potentials and boundary value problems}
\label{sec:invertible}



If $\HH_2^\Omega$ and $\DD_2$ are as in Section~\ref{sec:example}, then by the Lax-Milgram lemma there is a unique solution to the Dirichlet aed and Neumann boundary value problems
\begin{equation*}\left\{\begin{aligned}
(Lu)\big\vert_\Omega &= 0,\\
\Tr^\Omega_2 u &= \arr f,\\
\doublebar{u}_{\HH_2^\Omega} &\leq C \doublebar{\arr f}_{\DD_2},
\end{aligned}\right.\qquad
\left\{\begin{aligned}
(Lu)\big\vert_\Omega &= 0,\\
\M_\B^\Omega u &= \arr g,\\
\doublebar{u}_{\HH^\Omega_2} &\leq C \doublebar{\arr g}_{\NN_2}
.\end{aligned}\right.
\end{equation*}
We routinely wish to establish existence and uniqueness to the Dirichlet and Neumann boundary value problems
\begin{equation*}
(D)^{\widehat L}_\XX\left\{\begin{aligned}
(\widehat Lu)\big\vert_\Omega &= 0,\\
\WTr_\XX^\Omega u &= \arr f,\\
\doublebar{u}_{\XX^\Omega} &\leq C \doublebar{\arr f}_{\DD_\XX},
\end{aligned}\right.\qquad
(N)^\B_\XX\left\{\begin{aligned}
(\widehat Lu)\big\vert_\Omega &= 0,\\
\MM_\B^\Omega u &= \arr g,\\
\doublebar{u}_{\XX^\Omega} &\leq C \doublebar{\arr g}_{\NN_\XX}
\end{aligned}\right.
\end{equation*}
for some constant $C$ and some other solution space $\XX$ and spaces of Dirichlet and Neumann boundary data $\DD_\XX$ and $\NN_\XX$. For example, if $L$ is a second-order differential operator, then as in \cite{JerK81B,KenP93,KenR09,DinPR13p} we might wish to establish well-posedness with $\DD_\XX=\dot W_1^p(\partial\Omega)$, $\NN_\XX=L^p(\partial\Omega)$ and $\XX=\{u:\widetilde N(\nabla u)\in L^p(\partial\Omega)\}$, where $\widetilde N$ is the nontangential maximal function introduced in \cite{KenP93}.

The classic method of layer potentials states that if layer potentials, originally defined as bounded operators $\D^\B_\Omega:\DD_2\mapsto\HH_2$ and $\s^L_\Omega:\NN_2\mapsto\HH_2$, may be extended to operators $\D^\B_\Omega:\DD_\XX\mapsto\XX$ and $\s^L_\Omega:\NN_\XX\mapsto\XX$, and if certain of the properties of layer potentials of Section~\ref{sec:properties} are preserved by that extension, then well posedness of boundary value problems are equivalent to certain invertibility properties of layer potentials.

In this section we will make this notion precise. 

As in Sections~\ref{sec:dfn}, \ref{sec:D:S} and~\ref{sec:properties}, we will work with layer potentials and function spaces in a very abstract setting. 

\subsection{From invertibility to well posedness}
\label{sec:invertible:well-posed}

In this section we will need the following objects.
\begin{itemize}
\item Quasi-Banach spaces $\XX^\Omega$, $\DD_\XX$ and $\NN_\XX$.
\item A linear operator $u\mapsto(\widehat L u)\big\vert_\Omega$ acting on~$\XX^\Omega$. 
\item Linear operators $\WTr_\XX^\Omega:\{u\in\XX^\Omega:(\widehat L u)\big\vert_\Omega =0\}\mapsto\DD_\XX$ and $\MM_\B^\Omega:\{u\in\XX^\Omega:(\widehat L u)\big\vert_\Omega =0\}\mapsto\NN_\XX$.
\item Linear operators $\widehat\D^\B_\Omega:\DD_\XX\mapsto \XX^\Omega$ and $\widehat\s^L_\Omega:\NN_\XX\mapsto \XX^\Omega$.
\end{itemize}
\begin{rmk}
Recall that $\s^L_\Omega=\s^L_\CC$ is defined in terms of a ``global'' Hilbert space $\HH_2$. If $\XX^\Omega=\HH^\Omega$, then $\widehat \s^L_\Omega\arr g = \s^L_\Omega\arr g\big\vert_\Omega$. In the general case, we do not assume the existence of a global quasi-Banach space $\XX$ whose restrictions to $\Omega$ lie in~$\XX^\Omega$, and thus we will let $\widehat \s^L_\Omega\arr g$ be an element of $\XX^\Omega$ without assuming a global extension.
\end{rmk}


In applications it is often useful to define $\Tr^\Omega$, $\M_\B^\Omega$, $L$, $\D^\B_\Omega$ and $\s^L_\Omega$ in terms of some Hilbert spaces $\HH_j$, $\HH_j^\Omega$ and to extend these operators to operators with domain or range $\XX^\Omega$ by density or some other means. See, for example, \cite{BarHM17pC}. We will not assume that the operators $\WTr_\XX^\Omega$, $\MM_\B^\Omega$, $\widehat L$, $\widehat\D^\B_\Omega$ and $\widehat\s^L_\Omega$ arise by density; we will merely require that they satisfy certain properties similar to those established in Section~\ref{sec:properties}. 


Specifically, we will often use the following conditions; observe that if $\XX^\Omega=\HH_2^\Omega$ for some $\HH_2^\Omega$ as in Section~\ref{sec:dfn}, these properties are valid.
\begin{description}
\item[\hypertarget{ConditionT}{\condTname}] 
$\WTr_\XX^\Omega$ is a bounded operator $\{u\in\XX^\Omega:(\widehat L u)\big\vert_\Omega =0\}\mapsto \DD_\XX$.
\item[\hypertarget{ConditionM}{\condMname}] 
$\MM_\B^\Omega$ is a bounded operator $\{u\in\XX^\Omega:(\widehat L u)\big\vert_\Omega =0\}\mapsto \NN_\XX$.
\item[\hypertarget{ConditionS}{\condSname}] The single layer potential $\widehat\s^L_\Omega$ is bounded $\NN_\XX\mapsto \XX^\Omega$, and if $\arr g\in \NN_\XX$ then $(\widehat L (\widehat\s^L_\Omega\arr g))\big\vert_\Omega=0$.
\item[\hypertarget{ConditionD}{\condDname}] The double layer potential $\widehat\D^\B_\Omega$ is bounded $\DD_\XX\mapsto\XX^\Omega$, and if $\arr f\in \DD_\XX$ then $(\widehat L (\widehat\D^\B_\Omega\arr f))\big\vert_\Omega=0$.
\item[\hypertarget{ConditionG}{\condGname}] If $u \in \XX^\Omega$ and $(\widehat L u)\big\vert_\Omega =0$, then we have the Green's formula
\begin{equation*}u = -\widehat\D^\B_\Omega (\WTr_\XX^\Omega u) + \widehat\s^L_\Omega (\MM_\B^\Omega u).\end{equation*}
\end{description}

We remark that the linear operator $u\mapsto(\widehat L u)\big\vert_\Omega$ is used only to characterize the subspace $\XX^\Omega_{ L}=\{u\in\XX^\Omega:(\widehat L u)\big\vert_\Omega =0\}$. We could work directly with $\XX^\Omega_{ L}$; however, we have chosen to use the more cumbersome notation $\{u\in\XX^\Omega:(\widehat L u)\big\vert_\Omega =0\}$ to emphasize that the following arguments, presented here only in terms of linear operators, are intended to be used in the context of boundary value problems for differential equations.



The following theorem is straightforward to prove and is the core of the classic method of layer potentials.
\begin{thm}\label{thm:surjective:existence}
Let $\XX$, $\DD_\XX$, $\NN_\XX$, $\widehat\D^\B_\Omega$, $\widehat\s^L_\Omega$, $\WTr_\XX^\Omega$ and $\MM_\B^\Omega$ be quasi-Banach spaces and linear operators with domains and ranges as above.

Suppose that Conditions~\condT\ and \condS\ are valid, and that $\WTr_\XX^\Omega \widehat\s^L_\Omega: \NN_\XX \mapsto \DD_\XX$ is surjective. 
Then for every $\arr f\in \DD_\XX$, there is some $u$ such that
\begin{equation}\label{eqn:Dirichlet:weak}
(\widehat L u)\big\vert_\Omega = 0, \quad \WTr_\XX^\Omega u = \arr f, \quad u\in\XX^\Omega.\end{equation}
Suppose in addition $\WTr_\XX^\Omega \widehat\s^L_\Omega: \NN_\XX \mapsto \DD_\XX$ has a bounded right inverse, that is, there is a constant $C_0$ such that if $\arr f\in\DD_\XX$, then there is some preimage $\arr g$ of $\arr f$ with $\doublebar{\arr g}_{\NN_\XX}\leq C_0\doublebar{\arr f}_{\DD_\XX}$. Then there is some constant $C_1$ depending on $C_0$ and the implicit constants in Conditions~\condT\ and~\condS\ such that if $\arr f\in\DD_\XX$, then there is some $u\in\XX^\Omega$ such that
\begin{equation}
\label{eqn:Dirichlet:strong}
(\widehat L u)\big\vert_\Omega = 0, \quad \WTr_\XX^\Omega u = \arr f, \quad \doublebar{u}_{\XX^\Omega}\leq C_1\doublebar{\arr f}_{\DD_\XX}.\end{equation}

Suppose that Conditions~\condM\ and \condD\ are valid, and that $\MM_\B^\Omega\widehat\D^\B_\Omega: \DD_\XX \mapsto \NN_\XX$ is surjective. 
Then for every $\arr g\in \NN_\XX$, there is some $u$ such that
\begin{equation}\label{eqn:Neumann:weak}(\widehat L u)\big\vert_\Omega = 0, \quad \MM_\B^\Omega u = \arr g, \quad u\in\XX^\Omega.\end{equation}
If $\MM_\B^\Omega\widehat\D^\B_\Omega: \DD_\XX \mapsto \NN_\XX$ has a bounded right inverse, then there is some constant $C_1$ depending on the bound on that inverse and the implicit constants in Conditions~\condM\ and~\condD\ such that if $\arr g\in\NN_\XX$, then there is some $u\in\XX^\Omega$ such that
\begin{equation}\label{eqn:Neumann:strong}(\widehat L u)\big\vert_\Omega = 0, \quad \MM_\B^\Omega u = \arr g, \quad \doublebar{u}_{\XX^\Omega}\leq C_1\doublebar{\arr g}_{\NN_\XX}.\end{equation}

\end{thm}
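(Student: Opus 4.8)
The plan is to prove each implication by the obvious composition-of-operators argument, exactly as in Verchota's classical scheme. First I would handle the Dirichlet problem. Given $\arr f\in\DD_\XX$, since $\WTr_\XX^\Omega \widehat\s^L_\Omega:\NN_\XX\mapsto\DD_\XX$ is surjective, pick $\arr g\in\NN_\XX$ with $\WTr_\XX^\Omega \widehat\s^L_\Omega\arr g=\arr f$, and set $u=\widehat\s^L_\Omega\arr g$. By Condition~\condS, $u\in\XX^\Omega$ and $(\widehat L u)\big\vert_\Omega=0$, and by construction $\WTr_\XX^\Omega u=\arr f$; this gives \eqref{eqn:Dirichlet:weak}. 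For \eqref{eqn:Dirichlet:strong}, choose the preimage $\arr g$ supplied by the bounded right inverse, so $\doublebar{\arr g}_{\NN_\XX}\leq C_0\doublebar{\arr f}_{\DD_\XX}$; then by Condition~\condS,
\[
\doublebar{u}_{\XX^\Omega}=\doublebar{\widehat\s^L_\Omega\arr g}_{\XX^\Omega}\leq \doublebar{\widehat\s^L_\Omega}_{\NN_\XX\mapsto\XX^\Omega}\doublebar{\arr g}_{\NN_\XX}\leq C_0\doublebar{\widehat\s^L_\Omega}_{\NN_\XX\mapsto\XX^\Omega}\doublebar{\arr f}_{\DD_\XX},
\]
so $C_1 = C_0\doublebar{\widehat\s^L_\Omega}_{\NN_\XX\mapsto\XX^\Omega}$ works. (Condition~\condT\ is not strictly needed for this half, but it is the natural hypothesis under which $\WTr_\XX^\Omega\widehat\s^L_\Omega$ even makes sense as a bounded operator; I would note that its role here is only to guarantee $\WTr_\XX^\Omega$ is defined on the range of $\widehat\s^L_\Omega$.)

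The Neumann problem is completely parallel, using Conditions~\condM\ and~\condD\ in place of~\condT\ and~\condS. Given $\arr g\in\NN_\XX$, surjectivity of $\MM_\B^\Omega\widehat\D^\B_\Omega:\DD_\XX\mapsto\NN_\XX$ gives $\arr f\in\DD_\XX$ with $\MM_\B^\Omega\widehat\D^\B_\Omega\arr f=\arr g$; put $u=\widehat\D^\B_\Omega\arr f$. Condition~\condD\ gives $u\in\XX^\Omega$ with $(\widehat L u)\big\vert_\Omega=0$, and $\MM_\B^\Omega u=\arr g$ by construction, yielding \eqref{eqn:Neumann:weak}. If the bounded right inverse is available with constant $C_0$, pick $\arr f$ with $\doublebar{\arr f}_{\DD_\XX}\leq C_0\doublebar{\arr g}_{\NN_\XX}$ and estimate $\doublebar{u}_{\XX^\Omega}\leq C_0\doublebar{\widehat\D^\B_\Omega}_{\DD_\XX\mapsto\XX^\Omega}\doublebar{\arr g}_{\NN_\XX}$ using Condition~\condD, giving \eqref{eqn:Neumann:strong}.

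There is no genuine obstacle here: the statement is, as the authors say, ``straightforward,'' and the only thing to be careful about is bookkeeping — verifying at each step that the operator being composed is actually defined on the relevant subspace $\{u\in\XX^\Omega:(\widehat L u)\big\vert_\Omega=0\}$, which is precisely what Conditions~\condS\ and~\condD\ assert about the ranges of the layer potentials, and checking that the quasi-norm estimates only require quasi-Banach structure (the triangle-type inequality is not even invoked, just homogeneity of the operator norms). If anything merits a sentence of comment, it is that we never use uniqueness or any jump relation in this direction — invertibility proper (as opposed to mere surjectivity with bounded right inverse) is not needed for existence, and the converse direction (well posedness $\Rightarrow$ invertibility), which does use the Green's formula Condition~\condG\ and the relations of Section~\ref{sec:properties}, is deferred to the next subsection.
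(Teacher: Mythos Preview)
Your proposal is correct and is exactly the argument the paper has in mind: the paper omits the proof entirely, remarking only that the theorem ``is straightforward to prove and is the core of the classic method of layer potentials.'' Your composition-of-operators argument is precisely that straightforward proof, and your side remarks about the role of Condition~\condT\ and the fact that no jump relations or Green's formula are needed here are accurate and helpful.
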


Thus, surjectivity of layer potentials implies of solutions to for boundary value problems. 

We may also show that injectivity of layer potentials implies uniqueness of solutions to boundary value problems. This argument appeared first in \cite{BarM16A} and is the converse to an argument of \cite{Ver84}.

\begin{thm}\label{thm:injective:unique}
Let $\XX$, $\DD_\XX$, $\NN_\XX$, $\widehat\D^\B_\Omega$, $\widehat\s^L_\Omega$, $\WTr_\XX^\Omega$ and $\MM_\B^\Omega$ be quasi-Banach spaces and linear operators with domains and ranges as above. Suppose that Conditions~\condT, \condM, \condD, \condS\ and~\condG\ are all valid.

Suppose that the operator 
$\WTr_\XX^\Omega \widehat\s^L_\Omega: \NN_\XX \mapsto \DD_\XX$ is one-to-one. Then for each $\arr f\in\DD_\XX$, there is at most one solution $u$ to the Dirichlet problem
\begin{equation*}(\widehat L u)\big\vert_\Omega = 0, \quad \WTr_\XX^\Omega u = \arr f, \quad u\in\XX^\Omega.\end{equation*}
If $\WTr_\XX^\Omega \widehat\s^L_\Omega: \NN_\XX \mapsto \DD_\XX$ has a bounded left inverse, that is, there is a constant $C_0$ such that the estimate $\doublebar{\arr g}_{\NN_\XX}\leq C_0 \doublebar{\WTr_\XX^\Omega \widehat\s^L_\Omega \arr g}_{\DD_\XX}$ is valid, then there is some constant $C_1$ such that every $u\in\XX^\Omega$ with $(\widehat L u)\big\vert_\Omega=0$ satisfies $\doublebar{u}_{\XX^\Omega}\leq C_1\doublebar{\Tr_\XX^\Omega u}_{\DD_\XX}$ (that is, if $u$ satisfies the Dirichlet problem~\eqref{eqn:Dirichlet:weak} then $u$ must satisfy the Dirichlet problem~\eqref{eqn:Dirichlet:strong}).

Similarly, if the operator $\MM_\B^\Omega \widehat\D^\B_\Omega:\DD_\XX \mapsto \NN_\XX$ is one-to-one, then for each $\arr g\in\NN_\XX$, there is at most one solution $u$ to the Neumann problem
\begin{equation*}(\widehat L u)\big\vert_\Omega = 0, \quad \MM_\B^\Omega u = \arr g, \quad u\in\XX^\Omega.\end{equation*}
If $\MM_\B^\Omega \widehat\D^\B_\Omega:\DD_\XX \mapsto \NN_\XX$ has a bounded left inverse, then there is some constant $C_1$ such that every $u\in\XX^\Omega$ with $(\widehat L u)\big\vert_\Omega=0$ satisfies $\doublebar{u}_{\XX^\Omega}\leq C_1\doublebar{\MM_\B^\Omega u}_{\DD_\XX}$. 
\end{thm}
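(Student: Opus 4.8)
The plan is to run the converse of Verchota's argument: use the Green's formula of Condition~\condG\ to write an arbitrary null solution as a sum of a double layer potential of its Dirichlet data and a single layer potential of its Neumann data, and then use the injectivity hypothesis to kill one of the two traces. So let $u\in\XX^\Omega$ with $(\widehat L u)\big\vert_\Omega=0$; Condition~\condG\ gives
\begin{equation*}u = -\widehat\D^\B_\Omega (\WTr_\XX^\Omega u) + \widehat\s^L_\Omega (\MM_\B^\Omega u).\end{equation*}
By the second halves of Conditions~\condD\ and~\condS, both $\widehat\D^\B_\Omega (\WTr_\XX^\Omega u)$ and $\widehat\s^L_\Omega (\MM_\B^\Omega u)$ lie in $\{v\in\XX^\Omega:(\widehat L v)\big\vert_\Omega=0\}$, so $\WTr_\XX^\Omega$ and $\MM_\B^\Omega$ may legitimately be applied to them; combined with Conditions~\condT\ and~\condM\ this also shows that $\WTr_\XX^\Omega\widehat\D^\B_\Omega:\DD_\XX\to\DD_\XX$ and $\MM_\B^\Omega\widehat\s^L_\Omega:\NN_\XX\to\NN_\XX$ are bounded.

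For uniqueness for the Dirichlet problem, suppose $\WTr_\XX^\Omega u=0$. Then, by linearity of $\widehat\D^\B_\Omega$, the Green's formula collapses to $u=\widehat\s^L_\Omega(\MM_\B^\Omega u)$; applying $\WTr_\XX^\Omega$ gives $\WTr_\XX^\Omega\widehat\s^L_\Omega(\MM_\B^\Omega u)=\WTr_\XX^\Omega u=0$, and since $\WTr_\XX^\Omega\widehat\s^L_\Omega$ is one-to-one we get $\MM_\B^\Omega u=0$, whence $u=\widehat\s^L_\Omega(0)=0$. Applying this to the difference of two solutions of the Dirichlet problem with the same boundary data gives the claimed uniqueness. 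The Neumann case is symmetric: if $\MM_\B^\Omega u=0$ then $u=-\widehat\D^\B_\Omega(\WTr_\XX^\Omega u)$, applying $\MM_\B^\Omega$ gives $\MM_\B^\Omega\widehat\D^\B_\Omega(\WTr_\XX^\Omega u)=0$, one-to-oneness of $\MM_\B^\Omega\widehat\D^\B_\Omega$ forces $\WTr_\XX^\Omega u=0$, and hence $u=0$.

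For the quantitative statements, apply $\WTr_\XX^\Omega$ to the Green's formula (Dirichlet case) to obtain
\begin{equation*}\WTr_\XX^\Omega\widehat\s^L_\Omega(\MM_\B^\Omega u) = \WTr_\XX^\Omega u + \WTr_\XX^\Omega\widehat\D^\B_\Omega(\WTr_\XX^\Omega u),\end{equation*}
whose $\DD_\XX$-quasinorm is at most $C\doublebar{\WTr_\XX^\Omega u}_{\DD_\XX}$, where $C$ depends only on the quasi-triangle constant of $\DD_\XX$ and the implicit constants of Conditions~\condT\ and~\condD. The assumed bounded left inverse then yields $\doublebar{\MM_\B^\Omega u}_{\NN_\XX}\leq C_0 C\doublebar{\WTr_\XX^\Omega u}_{\DD_\XX}$, and substituting this together with the boundedness of $\widehat\D^\B_\Omega$ and $\widehat\s^L_\Omega$ (Conditions~\condD\ and~\condS) back into the Green's formula gives $\doublebar{u}_{\XX^\Omega}\leq C_1\doublebar{\WTr_\XX^\Omega u}_{\DD_\XX}$. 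The Neumann estimate follows by the same computation with $\WTr_\XX^\Omega$ and $\MM_\B^\Omega$, and $\widehat\D^\B_\Omega$ and $\widehat\s^L_\Omega$, interchanged, using the bounded left inverse of $\MM_\B^\Omega\widehat\D^\B_\Omega$.

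The argument is essentially routine once Green's formula is in hand; the only points requiring care are (i) checking that $\WTr_\XX^\Omega$ and $\MM_\B^\Omega$ are actually defined on the relevant layer-potential terms, which is precisely the content of the ``$(\widehat L\,\cdot\,)\big\vert_\Omega=0$'' clauses in Conditions~\condD\ and~\condS, and (ii) bookkeeping the quasi-Banach quasi-triangle-inequality constants, since $\XX^\Omega$, $\DD_\XX$ and $\NN_\XX$ are only assumed quasi-Banach. I do not expect either to constitute a genuine obstacle.
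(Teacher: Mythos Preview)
Your proof is correct and follows essentially the same approach as the paper: both use Green's formula \condG\ to express a null solution in terms of its Dirichlet and Neumann data, then use injectivity (respectively, the left-inverse bound) of the relevant boundary-to-boundary operator together with Conditions \condT, \condM, \condD, \condS\ to conclude. The only cosmetic differences are that you work directly with the difference $u-v$ for uniqueness and first isolate the bound on the ``other'' boundary data before substituting back into Green's formula, whereas the paper keeps $u$ and $v$ separate and invokes Green's formula twice; the ingredients and logic are the same.
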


\begin{proof} 
We present the proof only for the Neumann problem; the argument for the Dirichlet problem is similar.

Suppose that $u$, $v\in\XX^\Omega$ with $(\widehat Lu)\big\vert_\Omega=(\widehat Lv)\big\vert_\Omega=0$ in~$\Omega$ and $\MM_\B^\Omega u=\arr g=\MM_\B^\Omega v$. By Condition~\condG,
\begin{align*}
u
= -\widehat\D^\B_\Omega (\WTr_\XX^\Omega u) + \widehat\s^L_\Omega (\MM_\B^\Omega u)
&= -\widehat\D^\B_\Omega (\WTr_\XX^\Omega u) + \widehat\s^L_\Omega \arr g
,\\
v= -\widehat\D^\B_\Omega (\WTr_\XX^\Omega v) + \widehat\s^L_\Omega (\MM_\B^\Omega v)
&= -\widehat\D^\B_\Omega (\WTr_\XX^\Omega v) + \widehat\s^L_\Omega \arr g
.\end{align*}
In particular, $\MM_\B^\Omega \widehat\D^\B_\Omega (\WTr_\XX^\Omega u) = \MM_\B^\Omega \widehat\D^\B_\Omega (\WTr_\XX^\Omega v)$. If $\MM_\B^\Omega \widehat\D^\B_\Omega$ is one-to-one, then $\WTr_\XX^\Omega u = \WTr_\XX^\Omega v$. Another application of Condition~\condG\ yields that $u=v$.

Now, suppose that we have the estimate $\doublebar{\arr f}_{\DD_\XX}\leq C \doublebar{\MM_\B^\Omega \widehat\D^\B_\Omega \arr f}_{\NN_\XX}$. (This implies injectivity of $\MM_\B^\Omega \widehat\D^\B_\Omega$.) Let $u\in {\XX^\Omega}$ with $(\widehat L u)\big\vert_\Omega=0$; we want to show that $\doublebar{u}_{\XX^\Omega}\leq C \doublebar{\MM_\B^\Omega u}_{\DD_\XX}$.

By Condition~\condG, and because $\XX^\Omega$ is a quasi-Banach space,
\begin{equation*}\doublebar{u}_{\XX^\Omega}\leq C\doublebar{\widehat\D^\B_\Omega (\WTr_\XX^\Omega  u)}_{\XX^\Omega}  + C\doublebar{\widehat\s^L_\Omega (\MM_\B^\Omega u)}_{\XX^\Omega}.\end{equation*}
By Conditions~\condD\ and~\condS,
\begin{equation*}\doublebar{u}_\XX\leq C\doublebar{\WTr_\XX^\Omega  u}_{\DD_\XX}  + C\doublebar{\MM_\B^\Omega u}_{\NN_\XX}.\end{equation*}
Applying our estimate on $\MM_\B^\Omega \widehat\D^\B_\Omega$, we see that
\begin{equation*}\doublebar{u}_\XX\leq C\doublebar{\MM_\B^\Omega \widehat\D^\B_\Omega\WTr_\XX^\Omega  u}_{\NN_\XX}  + C\doublebar{\MM_\B^\Omega u}_{\NN_\XX}.\end{equation*}
By Condition~\condG,
$\widehat\D^\B_\Omega(\WTr_\XX^\Omega u) =\widehat\s^L_\Omega(\MM_\B^\Omega u) - u $, and so
\begin{equation*}\doublebar{u}_\XX\leq C\doublebar{\MM_\B^\Omega \widehat \s^L_\Omega\MM_\B^\Omega u}_{\NN_\XX}  + C\doublebar{\MM_\B^\Omega u}_{\NN_\XX}.\end{equation*}
Another application of Condition~\condS\ and of Condition~\condM\  completes the proof. \end{proof}

\subsection{From well posedness to invertibility}
\label{sec:well-posed:invertible}

We are now interested in the converse results. That is, we have shown that results for layer potentials imply results for boundary value problems; we would like to show that results for boundary value problems imply results for layer potentials.

Notice that the above results were built on the Green's formula~\condG. The converse results will be built on jump relations, as in Lemma~\ref{lem:jump}. Recall that jump relations treat the interplay between layer potentials in a domain and in its complement; thus we will need to impose conditions in both domains. 

In this section we will need the following spaces and operators.
\begin{itemize}
\item Quasi-Banach spaces $\XX^\UU$, $\XX^\VV$, $\DD_\XX$ and $\NN_\XX$.
\item Linear operators $u\mapsto(\widehat L u)\big\vert_\UU$ and $u\mapsto(\widehat L u)\big\vert_\VV$ acting on~$\XX^\UU$ and~$\XX^\VV$. 
\item Linear operators $\WTr_\XX^\UU $, $\MM_\B^\UU$, $\WTr_\XX^\VV $, and $\MM_\B^\VV$ acting on $\{u\in\XX^\UU:(\widehat L u)\big\vert_\UU =0\}$ or $\{u\in\XX^\VV:(\widehat L u)\big\vert_\VV =0\}$.
\item Linear operators $\widehat\D^\B_\UU$, $\widehat\D^\B_\VV$ acting on $\DD_\XX$ and $\widehat\s^L_\UU$, $\widehat\s^L_\VV$ acting on $\NN_\XX$.
\end{itemize}
In the applications $\UU$ is an open set in $\R^\dmn$ or in a smooth manifold, and $\VV=\R^\dmn\setminus\overline\UU$ is the interior of its complement. The space $\XX^\VV$ is then a space of functions defined in~$\VV$ and is thus a different space from $\XX^\UU$. However, we emphasize that we have defined only one space $\DD_\XX$ of Dirichlet boundary values and one space $\NN_\XX$ of Neumann boundary values; that is, the traces from both sides of the boundary must lie in the same spaces.

We will often use the following conditions.
\begin{description}
\item[\hypertarget{ConditionTT}{\condTTname}, \hypertarget{ConditionMM}{\condMMname}, \hypertarget{ConditionSS}{\condSSname}, \hypertarget{ConditionDD}{\condDDname}, \hypertarget{ConditionGG}{\condGGname}] Condition \condT, \condM, \condS, \condD, or \condG\ holds for both $\Omega=\UU$ and $\Omega=\VV$.
\item[\hypertarget{ConditionJScts}{\condJSctsname}] If $\arr g\in\NN_\XX$, then we have the continuity relation
\begin{align*}
\WTr_\XX^\UU (\widehat\s^L_\UU \arr g) -\WTr_\XX^\VV (\widehat\s^L_\VV \arr g)
	&=0
.\end{align*}
\item[\hypertarget{ConditionJDcts}{\condJDctsname}] If $\arr f\in\DD_\XX$, then we have the continuity relation
\begin{align*}
\MM_\B^\UU (\widehat\D^\B_\UU\arr f)  - \MM_\B^\VV (\widehat\D^\B_\VV\arr f )
	&=0
.\end{align*}
\item[\hypertarget{ConditionJSjump}{\condJSjumpname}] If $\arr g\in\NN_\XX$, then we have the jump relation
\begin{align*}
 \MM_\B^\UU (\widehat\s^L_\UU ) 
+\MM_\B^\VV (\widehat\s^L_\VV\arr g)
	&=\arr g
.\end{align*}
\item[\hypertarget{ConditionJDjump}{\condJDjumpname}] If $\arr f\in\DD_\XX$, then we have the jump relation
\begin{align*}
\WTr_\XX^\UU (\widehat\D^\B_\UU\arr f)  +\WTr_\XX^\VV (\widehat\D^\B_\VV\arr f)
	&=-\arr f
.\end{align*}
\end{description}

We now move from well posedness of boundary value problems to invertibility of layer potentials.

The following theorem uses an argument of Verchota from \cite{Ver84}.

\begin{thm}\label{thm:unique:injective}
Assume that Conditions~\condMM, 
\condSS, \condJScts, and \condJSjump\  are valid.
Suppose that, for any $\arr f\in\DD_\XX$, there is at most one solution $u_+$ or $u_-$ to each of the two Dirichlet problems
\begin{gather*}
(\widehat L u_+)\big\vert_\UU = 0, \quad \WTr_\XX^\UU  u_+ = \arr f, \quad u_+\in\XX^\UU
,\\
(\widehat L u_-)\big\vert_\VV = 0, \quad \WTr_\XX^\VV  u_- = \arr f, \quad u_-\in\XX^\VV
.\end{gather*}
Then $\Tr_\XX^{\UU}\widehat\s^L_\UU:\NN_\XX\mapsto\DD_\XX$ is one-to-one.

If in addition there is a constant $C_0$ such that every $u_+\in\XX^\UU$ and $u_-\in\XX^\VV$ with $(\widehat L u_+)\big\vert_\UU =0$ and $(\widehat L u_+)\big\vert_\VV = 0$ satisfies
\begin{equation*}\doublebar{u_+}_{\XX^\UU}\leq C_0\doublebar{\WTr_\XX^\UU  u}_{\DD_\XX},
\quad
\doublebar{u_-}_{\XX^\VV}\leq C_0\doublebar{\WTr_\XX^\VV  u}_{\DD_\XX},
\end{equation*}
then there is a constant $C_1$ such that the bound $\doublebar{\arr g}_{\NN_\XX} \leq C_1 \doublebar{\Tr_\XX^{\UU}\widehat\s^L_\UU\arr g}_{\DD_\XX}$ is valid for all $\arr g\in\NN_\XX$.

Similarly, assume that Conditions~
\condTT, \condDD, \condJDcts, and \condJDjump\  are valid. Suppose that for any $\arr g\in\NN_\XX$, there is at most one solution $u_\pm$ to each of the two Neumann problems
\begin{gather*}
(\widehat L u_+)\big\vert_\UU = 0, \quad \MM_\B^\UU  u_+ = \arr g, \quad u_+\in\XX^\UU
,\\
(\widehat L u_-)\big\vert_\VV = 0, \quad \MM_\B^\VV  u_- = \arr g, \quad u_-\in\XX^\VV
.\end{gather*}
Then $\MM_\B^\UU\widehat\D^\B_\UU:\DD_\XX\mapsto\NN_\XX$ is one-to-one.

If there is a constant $C_0$ such that every $u_+\in\XX^\UU$ and $u_-\in\XX^\VV$ with $(\widehat L u_+)\big\vert_\UU =0$ and $(\widehat L u_+)\big\vert_\VV = 0$ satisfies
\begin{equation*}\doublebar{u_+}_{\XX^\UU}\leq C_0\doublebar{\MM_\B^\UU u}_{\DD_\XX},
\quad
\doublebar{u_-}_{\XX^\VV}\leq C_0\doublebar{\MM_\B^\VV u}_{\DD_\XX},
\end{equation*}
then there is a constant $C_1$ such that the bound $\doublebar{\arr f}_{\DD_\XX} \leq C_1 \doublebar{\MM_\B^\UU\widehat\D^\B_\UU\arr f}_{\NN_\XX}$ is valid for all $\arr f\in\DD_\XX$. 
\end{thm}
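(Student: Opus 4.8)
The plan is to prove the Neumann statement (the Dirichlet statement being entirely symmetric), following Verchota's original strategy: use the jump and continuity relations to show that if $\MM_\B^\UU\widehat\D^\B_\UU\arr f=0$, then $\widehat\D^\B_\UU\arr f$ and $\widehat\D^\B_\VV\arr f$ are solutions to homogeneous Neumann problems (in $\UU$ and $\VV$ respectively), invoke uniqueness to conclude both vanish, and then use the jump relation \condJDjump\ to recover $\arr f=0$.

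First I would fix $\arr f\in\DD_\XX$ with $\MM_\B^\UU\widehat\D^\B_\UU\arr f=0$. By Condition \condDD\ (which gives Condition \condD\ for both $\UU$ and $\VV$), the functions $u_+=\widehat\D^\B_\UU\arr f\in\XX^\UU$ and $u_-=\widehat\D^\B_\VV\arr f\in\XX^\VV$ satisfy $(\widehat L u_+)\big\vert_\UU=0$ and $(\widehat L u_-)\big\vert_\VV=0$, and by Condition \condMM\ their Neumann data $\MM_\B^\UU u_+$ and $\MM_\B^\VV u_-$ are well-defined elements of $\NN_\XX$. By hypothesis $\MM_\B^\UU u_+ = \MM_\B^\UU\widehat\D^\B_\UU\arr f=0$, so $u_+$ solves the homogeneous Neumann problem in $\UU$; by the uniqueness hypothesis (with $\arr g=0$) we get $u_+=0$. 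Now the continuity relation \condJDcts\ gives $\MM_\B^\VV u_- = \MM_\B^\VV\widehat\D^\B_\VV\arr f = \MM_\B^\UU\widehat\D^\B_\UU\arr f = 0$, so $u_-$ solves the homogeneous Neumann problem in $\VV$, and uniqueness again forces $u_-=0$. Finally, the jump relation \condJDjump\ yields
\begin{equation*}
-\arr f = \WTr_\XX^\UU(\widehat\D^\B_\UU\arr f) + \WTr_\XX^\VV(\widehat\D^\B_\VV\arr f) = \WTr_\XX^\UU u_+ + \WTr_\XX^\VV u_- = 0,
\end{equation*}
so $\arr f=0$, proving injectivity of $\MM_\B^\UU\widehat\D^\B_\UU$.

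For the quantitative (bounded-left-inverse) statement, I would run the same argument keeping track of norms. Given arbitrary $\arr f\in\DD_\XX$, set $\arr g=\MM_\B^\UU\widehat\D^\B_\UU\arr f\in\NN_\XX$ and put $u_+=\widehat\D^\B_\UU\arr f$, $u_-=\widehat\D^\B_\VV\arr f$; then $u_+$ solves the Neumann problem in $\UU$ with data $\arr g$, and by \condJDcts, $u_-$ solves the Neumann problem in $\VV$ with the same data $\arr g$. The quantitative hypothesis (with the roles of the two stated estimates read as a priori bounds by the Neumann data) combined with Condition \condDD\ gives $\doublebar{u_+}_{\XX^\UU}\leq C_0\doublebar{\arr g}_{\NN_\XX}$ and $\doublebar{u_-}_{\XX^\VV}\leq C_0\doublebar{\arr g}_{\NN_\XX}$; applying Condition \condTT\ (boundedness of $\WTr_\XX^\UU$ and $\WTr_\XX^\VV$ on the solution spaces) and then the jump relation \condJDjump\ as above, $\doublebar{\arr f}_{\DD_\XX}\leq \doublebar{\WTr_\XX^\UU u_+}_{\DD_\XX} + \doublebar{\WTr_\XX^\VV u_-}_{\DD_\XX}\leq C_1\doublebar{\arr g}_{\NN_\XX} = C_1\doublebar{\MM_\B^\UU\widehat\D^\B_\UU\arr f}_{\NN_\XX}$, where $C_1$ depends only on $C_0$ and the implicit constants in \condTT\ and \condDD; the only subtlety is that if $\XX^\UU$ is merely quasi-Banach one should absorb the quasi-triangle constant into $C_1$.

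The Dirichlet case is the mirror image: use Conditions \condMM, \condSS, \condJScts, \condJSjump\ in place of \condTT, \condDD, \condJDcts, \condJDjump, replace double layer potentials by single layer potentials and $\WTr$ by $\MM$ in the roles where jump/continuity appear, and replace the homogeneous Neumann uniqueness by homogeneous Dirichlet uniqueness. I do not anticipate a genuine obstacle; the main point requiring care is bookkeeping — making sure each invoked condition is stated for the correct domain ($\UU$ versus $\VV$) and that the hypothesis "at most one solution" is applied with zero data to deduce $u_\pm=0$ (rather than applied to two competing solutions, which is the form used in Theorem~\ref{thm:injective:unique}). A secondary point is the quasi-Banach quasi-triangle inequality in the norm estimate, exactly as handled in the proof of Theorem~\ref{thm:injective:unique}.
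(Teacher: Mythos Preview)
Your proposal is correct and follows essentially the same approach as the paper: use \condDD\ to see that $\widehat\D^\B_\UU\arr f$ and $\widehat\D^\B_\VV\arr f$ are solutions, invoke Neumann uniqueness together with \condJDcts\ to control both, and then apply \condJDjump\ to recover~$\arr f$. The only cosmetic difference is that the paper phrases injectivity by taking two elements $\arr f,\arr h$ with equal image and showing $\arr f=\arr h$, whereas you take a single $\arr f$ in the kernel and show $\arr f=0$; by linearity these are equivalent, and your version is arguably cleaner.
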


\begin{proof}
As in the proof of Theorem~\ref{thm:injective:unique}, we will consider only the relationship between the Neumann problem and the double layer potential.

Let $\arr f$, $\arr h\in\DD_\XX$. By Condition~\condDD, $u_+=\widehat\D^\B_\UU\arr f\in\XX^\UU$ and $v_+=\widehat\D^\B_\UU \arr h\in\XX^\UU$. If $\MM_\B^\UU \widehat\D^\B_\UU \arr f = \MM_\B^\UU \widehat\D^\B_\UU \arr h$, then $\MM_\B^\UU u_+=\MM_\B^\UU v_+$. Because there is at most one solution to the Neumann problem, we must have that $u_+=v_+$, and in particular $\WTr_\XX^\UU  \widehat\D^\B_\UU \arr f = \WTr_\XX^\UU  \widehat\D^\B_\UU \arr h$.

By Condition~\condJDcts, we have that $\MM_\B^\VV \widehat\D^\B_\VV \arr f = \MM_\B^\VV \widehat\D^\B_\VV \arr h$. By Condition~\condDD\ and uniqueness of solutions to the $\VV$-Neumann problem, $\WTr_\XX^\VV  \widehat\D^\B_\VV \arr f = \WTr_\XX^\VV  \widehat\D^\B_\VV \arr h$. By \condJDjump, we have that
\begin{equation*}\arr f 
= \WTr_\XX^\UU  \widehat\D^\B_\UU \arr f + \WTr_\XX^\VV  \widehat\D^\B_\VV \arr f
= \WTr_\XX^\UU  \widehat\D^\B_\UU \arr h + \WTr_\XX^\VV  \widehat\D^\B_\VV \arr h
=\arr h\end{equation*}
and so $\MM_\B^\UU \widehat\D^\B_\UU$ is one-to-one.

Now assume the stronger condition, that is, that $C_0<\infty$. Because $\DD_\XX$ is a quasi-Banach space, if $\arr f\in\DD_\XX$ then by Condition~\condJDjump,
\begin{equation*}\doublebar{\arr f}_{\DD_\XX} \leq C\doublebar{\WTr_\XX^\UU  \widehat\D^\B_\UU \arr f}_{\DD_\XX}+\doublebar{\WTr_\XX^\VV  \widehat\D^\B_\VV \arr f}_{\DD_\XX}.\end{equation*}
By Condition~\condDD, $\widehat\D^\B_\UU \arr f\in\XX^\UU$ with $(\widehat L(\widehat\D^\B_\UU \arr f))\big\vert_\UU=0$. Thus by Condition~\condTT, $\doublebar{\WTr_\XX^\UU  \widehat\D^\B_\UU \arr f}_{\NN_\XX}\leq C\doublebar{\widehat\D^\B_\UU \arr f}_{\XX^\UU}$.
Thus,
\begin{equation*}\doublebar{\arr f}_{\DD_\XX} \leq C\doublebar{\widehat\D^\B_\UU \arr f}_{\XX^\UU}+\doublebar{\widehat\D^\B_\VV \arr f}_{\XX^\VV}.\end{equation*}
By definition of~$C_0$, 
\begin{equation*}\doublebar{\widehat\D^\B_\UU \arr f}_{\XX^\UU}\leq C_0\doublebar{\MM_\B^\UU\widehat\D^\B_\UU \arr f}_{\NN_\XX}\quad\text{and}\quad\doublebar{\widehat\D^\B_\VV \arr f}_{\XX^\VV}\leq C_0\doublebar{\MM_\B^\VV\widehat\D^\B_\VV \arr f}_{\NN_\XX}.\end{equation*} By Condition~\condJDcts, $\MM_\B^\VV\widehat\D^\B_\VV \arr f=\MM_\B^\UU\widehat\D^\B_\UU \arr f$ and so
\begin{equation*}\doublebar{\arr f}_{\DD_\XX} \leq 2CC_0\doublebar{\MM_\B^\UU\widehat\D^\B_\UU \arr f}_{\NN_\XX}\end{equation*}
as desired.
\end{proof}


Finally, we consider the relationship between existence and surjectivity. The following argument appeared first in \cite{BarM13}.

\begin{thm} \label{thm:existence:surjective}
Assume that Conditions~\condMM, \condGG,
\condJScts, and \condJDjump\  are valid.  
Suppose that, for any $\arr f\in\DD_\XX$, there is at least one pair of solutions $u_\pm$ to the pair of  Dirichlet problems
\begin{equation}\label{eqn:Dirichlet:ES}
(\widehat L u_+)\big\vert_\UU = (\widehat L u_-)\big\vert_\VV = 0, \>\>\> \WTr_\XX^\UU  u_+ = \WTr_\XX^\VV  u_- = \arr f, \>\>\> u_+\in\XX^\UU
, \>\>\> u_-\in\XX^\VV
.\end{equation}
Then $\Tr_\XX^{\UU}\widehat\s^L_\UU:\NN_\XX\mapsto\DD_\XX$ is onto.

Suppose that there is some $C_0<\infty$ such that,  if $\arr f\in\DD_\XX$, there is some pair of solutions $u^\pm$ to the problem~\eqref{eqn:Dirichlet:ES} with
\begin{equation*}\doublebar{u_+}_{\XX^\UU}\leq C_0\doublebar{\arr f}_{\DD_\XX},
\quad
\doublebar{u_-}_{\XX^\VV}\leq C_0\doublebar{\arr f}_{\DD_\XX}.
\end{equation*}
Then there is a constant $C_1$ such that for any $\arr f\in\DD_\XX$, there is a $\arr g\in\NN_\XX$ such that ${\Tr_\XX^{\UU}\widehat\s^L_\UU\arr g}=\arr f$ and
$\doublebar{\arr g}_{\NN_\XX} \leq C_1 \doublebar{\arr f}_{\DD_\XX}$.

Similarly, assume that Conditions~\condTT, \condGG,
\condJDcts, and \condJSjump\  are valid. Suppose that for any $\arr g\in\NN_\XX$, there is at least one pair of solutions $u_\pm$ to the pair of Neumann problems
\begin{equation}\label{eqn:Neumann:ES}
(\widehat L u_+)\big\vert_\UU = (\widehat L u_-)\big\vert_\VV = 0, \>\>\> \MM_\B^\UU u_+ = \MM_\B^\VV u_- = \arr g, \>\>\> u_+\in\XX^\UU
, \>\>\> u_-\in\XX^\VV
.\end{equation}
Then $\MM_\B^\UU\widehat\D^\B_\UU:\DD_\XX\mapsto\NN_\XX$ is onto.

Suppose that there is some $C_0<\infty$ such that, if $\arr g\in\NN_\XX$, there is some pair of solutions $u^\pm$ to the problem~\eqref{eqn:Neumann:ES} with
\begin{equation}\label{eqn:Neumann:bound}\doublebar{u_+}_{\XX^\UU}\leq C_0\doublebar{\arr g}_{\NN_\XX},
\quad
\doublebar{u_-}_{\XX^\VV}\leq C_0\doublebar{\arr g}_{\NN_\XX}.
\end{equation}
Then there is a constant $C_1$ such that for any $\arr g\in\NN_\XX$, there is an $\arr f\in\DD_\XX$ such that ${\MM_\B^\UU\widehat\D^\B_\UU\arr f}=\arr g$ and
$\doublebar{\arr f}_{\DD_\XX} \leq C_1 \doublebar{\arr g}_{\NN_\XX}$.
\end{thm}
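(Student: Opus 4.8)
The plan is to prove the theorem by combining the jump/continuity relations with the uniqueness-type input exactly as in Verchota's classic argument, and then to quantify the argument under the additional bound hypotheses. I present the argument for the Neumann--double-layer half; the Dirichlet--single-layer half is symmetric, interchanging the roles of $\WTr_\XX$ and $\MM_\B$, of $\widehat\D^\B$ and $\widehat\s^L$, and of $\DD_\XX$ and $\NN_\XX$, and using Conditions \condMM, \condSS, \condJScts, \condJSjump\ in place of \condTT, \condDD, \condJDcts, \condJDjump.

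First I would establish surjectivity of $\MM_\B^\UU\widehat\D^\B_\UU$. Fix $\arr g\in\NN_\XX$. By hypothesis there is a pair $u_\pm$ solving \eqref{eqn:Neumann:ES}, so $(\widehat L u_+)\big\vert_\UU=(\widehat L u_-)\big\vert_\VV=0$, $\MM_\B^\UU u_+=\MM_\B^\VV u_-=\arr g$, $u_+\in\XX^\UU$, $u_-\in\XX^\VV$. Apply Condition~\condGG\ (Green's formula for $\UU$ and for $\VV$) to write
\begin{equation*}u_+ = -\widehat\D^\B_\UU(\WTr_\XX^\UU u_+) + \widehat\s^L_\UU(\MM_\B^\UU u_+),\qquad u_- = -\widehat\D^\B_\VV(\WTr_\XX^\VV u_-) + \widehat\s^L_\VV(\MM_\B^\VV u_-).\end{equation*}
Set $\arr f = -\WTr_\XX^\UU u_+ - \WTr_\XX^\VV u_- \in\DD_\XX$. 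I want to show $\MM_\B^\UU\widehat\D^\B_\UU\arr f = \arr g$. Apply $\MM_\B^\UU$ to the Green's formula for $u_+$: using linearity, $\MM_\B^\UU u_+ = -\MM_\B^\UU\widehat\D^\B_\UU(\WTr_\XX^\UU u_+) + \MM_\B^\UU\widehat\s^L_\UU(\MM_\B^\UU u_+)$. Similarly apply $\MM_\B^\VV$ to the Green's formula for $u_-$. Adding the two identities, using $\MM_\B^\UU u_+=\MM_\B^\VV u_-=\arr g$ throughout, and invoking the continuity relation \condJDcts\ (so that $\MM_\B^\VV\widehat\D^\B_\VV(\WTr_\XX^\VV u_-)=\MM_\B^\UU\widehat\D^\B_\UU(\WTr_\XX^\VV u_-)$) together with the jump relation \condJSjump\ (so that $\MM_\B^\UU\widehat\s^L_\UU\arr g+\MM_\B^\VV\widehat\s^L_\VV\arr g=\arr g$), the two double-layer terms combine into $-\MM_\B^\UU\widehat\D^\B_\UU(\WTr_\XX^\UU u_+ + \WTr_\XX^\VV u_-) = \MM_\B^\UU\widehat\D^\B_\UU\arr f$ and the two single-layer terms combine to $\arr g$, yielding $2\arr g = \MM_\B^\UU\widehat\D^\B_\UU\arr f + \arr g$, hence $\MM_\B^\UU\widehat\D^\B_\UU\arr f=\arr g$. (The exact bookkeeping of which continuity/jump relation is applied where is the place I would be most careful; this is the main obstacle, since one must match the four Conditions \condMM, \condGG, \condJScts, \condJDjump\ against the four terms produced by the two Green's formulas, and a sign or index slip is easy.)

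For the quantitative statement, assume $C_0<\infty$ and pick the pair $u_\pm$ satisfying \eqref{eqn:Neumann:bound}, namely $\doublebar{u_+}_{\XX^\UU}\leq C_0\doublebar{\arr g}_{\NN_\XX}$ and $\doublebar{u_-}_{\XX^\VV}\leq C_0\doublebar{\arr g}_{\NN_\XX}$. With $\arr f=-\WTr_\XX^\UU u_+-\WTr_\XX^\VV u_-$ as above, apply Condition~\condTT\ to both $u_+$ and $u_-$ (which is legitimate since $(\widehat L u_\pm)$ vanishes appropriately) to get $\doublebar{\WTr_\XX^\UU u_+}_{\DD_\XX}\leq C\doublebar{u_+}_{\XX^\UU}$ and $\doublebar{\WTr_\XX^\VV u_-}_{\DD_\XX}\leq C\doublebar{u_-}_{\XX^\VV}$; since $\DD_\XX$ is a quasi-Banach space, $\doublebar{\arr f}_{\DD_\XX}\leq C(\doublebar{\WTr_\XX^\UU u_+}_{\DD_\XX}+\doublebar{\WTr_\XX^\VV u_-}_{\DD_\XX})\leq CC_0\doublebar{\arr g}_{\NN_\XX}$, which is the desired bound with $C_1=CC_0$. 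Note this uses \condTT, which is listed among the hypotheses of the Neumann half of the theorem. The Dirichlet half runs identically with the dual set of conditions, producing $\arr g\in\NN_\XX$ with $\Tr_\XX^\UU\widehat\s^L_\UU\arr g=\arr f$ and $\doublebar{\arr g}_{\NN_\XX}\leq C_1\doublebar{\arr f}_{\DD_\XX}$, completing the proof.
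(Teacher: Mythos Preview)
Your argument is correct and follows essentially the same route as the paper's proof: apply Green's formula \condGG\ to the two solutions $u_\pm$, take the appropriate boundary operator of each, add, and simplify using \condJDcts\ and \condJSjump\ to obtain $\arr g=\MM_\B^\UU\widehat\D^\B_\UU(-(\arr f_++\arr f_-))$; the quantitative bound then follows from \condTT\ and the quasi-norm triangle inequality. Two small slips to fix: in your parenthetical you list the Dirichlet-half conditions (\condMM, \condJScts, \condJDjump) while discussing the Neumann computation, and your substitution list for the Dirichlet half should read \condMM, \condGG, \condJScts, \condJDjump\ (not \condSS\ and \condJSjump), matching the theorem's hypotheses.
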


\begin{proof}
As usual we present the proof for the Neumann problem.
Choose some $\arr g\in\NN_\XX$ and let $u_+$ and $u_-$ be the solutions to the problem~\eqref{eqn:Neumann:ES} assumed to exist. (If $C_0<\infty$ we further require that the bound~\eqref{eqn:Neumann:bound} be valid.)

By Condition~\condTT, $\arr f_+=\WTr_\XX^\UU  u_+$ and $\arr f_-=\WTr_\XX^\VV  u_-$ exist and lie in~$\DD_\XX$. By Condition~\condGG,
\begin{align*}
2\arr g &= \MM_\B^\UU u_+ + \MM_\B^\VV u_-
\\&= \MM_\B^\UU (-\widehat\D^\B_\UU \arr f_+ + \widehat\s^L_\UU \arr g) + \MM_\B^\VV (-\widehat\D^\B_\VV \arr f_- + \widehat\s^L_\VV \arr g)
.\end{align*}
By Conditions~\condJDcts\ and \condJSjump\ and linearity of the operators $\MM_\B^\UU$, $\MM_\B^\VV$, we have that
\begin{align*}
2\arr g 
&=  
-\MM_\B^\UU\widehat\D^\B_\UU \arr f_+ 
+ \MM_\B^\UU\widehat\s^L_\UU \arr g 
-\MM_\B^\UU\widehat\D^\B_\UU \arr f_- 
+\arr g- \MM_\B^\UU\widehat\s^L_\UU \arr g
\\&=
\arr g -\MM_\B^\UU \widehat\D^\B_\UU (\arr f_+ + \arr f_-)
.\end{align*}
Thus, $\MM_\B^\UU \widehat\D^\B_\UU$ is surjective. If $C_0<\infty$, then because $\DD_\XX$ is a quasi-Banach space and by Condition~\condTT,
\begin{equation*}\doublebar{\arr f_+ + \arr f_-}_{\DD_\XX}
\leq C C_0\doublebar{\arr g}_{\NN_\XX}
\end{equation*}
as desired.
\end{proof}


\newcommand{\etalchar}[1]{$^{#1}$}
\providecommand{\bysame}{\leavevmode\hbox to3em{\hrulefill}\thinspace}
\providecommand{\MR}{\relax\ifhmode\unskip\space\fi MR }
\providecommand{\MRhref}[2]{%
  \href{http://www.ams.org/mathscinet-getitem?mr=#1}{#2}
}
\providecommand{\href}[2]{#2}

\end{document}